\title[]{On the Space-Time Analyticity of the Keller-Segel-Navier-Stokes system}
\author{Elie Abdo}
\address{Department of Mathematics, University of California Santa Barbara, CA 93106, USA.}
\email{elieabdo@ucsb.edu}
\author{Zhongtian Hu}
\address{Department of Mathematics, Duke University, Durham, NC 27708, USA}
\email{zhongtian.hu@duke.edu}
\newcommand{\pa}{\partial}
\newcommand{\p}{\partial}
\newcommand{\la}{\label}
\newcommand{\fr}{\frac}
\newcommand{\na}{\nabla}
\newcommand{\be}{\begin{equation*}}
\newcommand{\ee}{\end{equation*}}
\newcommand{\ba}{\begin{array}{l}}
\newcommand{\ea}{\end{array}}
\newcommand{\eps}{\epsilon}
\newtheorem{prop}{Proposition}[section]
\newtheorem{cor}{Corollary}[section]
\newtheorem{rem}{Remark}[section]
\newcommand{\beg}{\begin}
\newtheorem{lem}{Lemma}[section]
\newcommand{\R}{\mathbb R}
\def\RR{{\mathbb R}}
\def\TT{{\mathbb T}}
\def\NN{\mathbb N}
\def\calM{{\mathcal M}}
\def\phivv{\phi^v_{0,T}}
\def\phiv{\phi^v_{1,T}}
\newcommand{\n}[2]{\frac{({#1})^{#2}}{({#1})!}}
\begin{document}
\begin{abstract}
In this paper, we study the coupled Keller-Segel-Navier-Stokes system, which models chemotaxis occuring in ambient viscous fluid. We consider this nonlinear, nonlocal system on a periodic strip, equipped with homogeneous Neumann boundary conditions for the Keller-Segel part and no-slip boundary condition for the fluid part. We prove the simultaneous space-time analyticity of the solution up to the boundary based on energy methods.
\end{abstract} 

\keywords{space-time analyticity, periodic strip, chemotaxis, Keller-Segel, Navier-Stokes, nonlinear, nonlocal}

\maketitle
\section{Introduction}

We consider the Keller-Segel equation under the effect of ambient fluid flow modeled by buoyancy-driven Navier-Stokes equation in a periodic strip $\Omega := \TT \times [0,\pi]$:
\begin{equation}\label{eq:ksns}
    \begin{cases}
        \pa_t \rho + u \cdot \na \rho - \Delta \rho + \na \cdot (\rho \na c) = 0,\\
        -\Delta c = \rho - \bar{\rho},\quad \bar\rho = |\Omega|^{-1}\int_\Omega \rho dx,\\
        \pa_t u + u \cdot \na u - \Delta u + \na p = (0, \rho)^T,\\
        \na \cdot u = 0.
    \end{cases}
\end{equation}
We equip the system with initial data $(\rho_0, u_0)$, where $\rho_0$ is nonnegative, as well as homogeneous Neumann boundary conditions
\begin{equation} \la{5}
\na c \cdot n = 0,\quad\quad \na \rho \cdot n = 0,
\end{equation} where $n$ denotes the outward unit normal vector to $\pa \Omega$. 
The first equation in system \eqref{eq:ksns} describes the evolution of a scalar cell/bacteria density $\rho$, secreting an external chemical with density $c$ that attracts the bacteria themselves. In the meantime, we assume that the production and re-arrangement of chemoattractants occur at a much smaller timescale than that of micro-organism's evolution. This assumption allows us to model the chemical secretion via a Poisson equation. Moreover, in realistic settings, it is biologically meaningful to consider chemotaxis occuring in ambient fluids \cite{Pur,Qiu}. This motivates us to consider the coupling of Keller-Segel equation with a Navier-Stokes flow with velocity $u = (u_1,u_2)$ and a pressure $p$. In this work, we consider a specific scenario in which chemotaxis and fluid advection interact via buoyancy force, which is manifested in the forcing term of the Navier-Stokes equation in \eqref{eq:ksns}.

When the fluid flow is absent, i.e. $u \equiv 0$, the system \eqref{eq:ksns} is reduced to the classical parabolic-elliptic Keller-Segel equation, namely
\begin{equation}\label{eq:ks}
    \p_t \rho - \Delta \rho + \nabla\cdot (\rho \na c) = 0,\quad\quad -\Delta c = \rho - \bar\rho,
\end{equation}
equipping homogeneous Neumann boundary conditions \eqref{5}. This model was first proposed by Patlak \cite{Pat}, and Keller and Segel \cite{KS}. Since the proposition of \eqref{eq:ks}, there is a plethora of research dedicated to the well-posedness, singularity formation, and asymptotic behavior of this equation. We refer the readers to the following works \cite{Bed1,BCM,BDP,CGMN1,CGMN2,Hor,HY,Jag,Nag,V1,V2,Wei} and the references therein.

Moreover, there have been many attempts to investigate both regularity properties and quantitative behaviors of solutions for Keller-Segel equation coupling with various fluid models. We refer interested readers to the following works \cite{CKL,DiLM,Win1,Win2} which discuss regularity properties and quantitative behaviors of solutions to such systems. We highlight the fact that the chemotaxis-fluid coupling greatly complicates the analysis: more precisely, the presence of fluid might affect chemotaxis in a highly nontrivial fashion when comparing with the case without fluid advection. For example, global existence of solutions can be achieved near certain nontrivial steady states \cite{DuLM}; sufficiently strong passive flows that possess mixing properties \cite{BH,He1,HT,IXZ,KX} and some active flows that are buoyancy driven \cite{H,HK,HKY} or dissipation-enhancing \cite{He2,ZZZ} are also able to ensure global existence of solutions derived from initial data which can lead to finite-time blowup in the non-advected case. On the other hand, some chemotaxis-fluid models might still form blowup in finite time, possibly in a rather complicated way, see \cite{GH}.

In this paper, we are interested in the analyticity of solutions to the model \eqref{eq:ksns}.
The analyticity of semilinear parabolic models has been extensively studied in the literature under different geometric spatial settings 
and based on different approaches. Fourier series techniques, adapted to Hilbert spaces, have been employed to address the Gevrey regularity of partial differential equations equipped with periodic boundary conditions for Sobolev initial data (see for instance \cite{ ALW, FTS, FT, LG} and references therein). 
Under weaker regularity initial assumptions (namely $L^p$), the spatial analyticity of the Navier-Stokes equations was obtained in \cite{GK} based on a mild formulation of the corresponding complexified problem. A similar approach was adapted to other nonlocal nonlinear models with supernonlinear forcing (see for instance \cite{AI} and references therein). In the presence of physical boundaries, the aforementioned techniques break down, giving rise to the need for novel ideas. In \cite{kv}, the authors established a derivative reduction proof, based on classical energy methods, to study the simultaneous space-time analyticity of the heat and Stokes equations on half-space with homogeneous Dirichlet boundary conditions, and their proof was modified recently in \cite{AW} and generalized to a wider class of boundary conditions (including Neumann and homogeneous Robin). In this paper, we apply those results and successfully tackle the analyticity of the Keller-Segel-Navier-Stokes system, described by \eqref{eq:ksns}, where the challenges do not arise only from the boundary effects but also from the nonlinear and nonlocal aspects of the involved equations.

\subsection{Functional Setting.}  For $1 \le p \le \infty$, we denote by $L^p(\Omega)$ the Lebesgue spaces of measurable functions $f$ from $\Omega$ to $\R$ (or $\RR^2)$ such that 
\be 
\|f\|_{L^p} = \left(\int_{\Omega} \|f\|^p dx\right)^{1/p} <\infty
\ee if $p \in [1, \infty)$ and 
\be 
\|f\|_{L^{\infty}} = {\mathrm{esssup}}_{\Omega}  |f| < \infty
\ee if $p = \infty$. The $L^2$ inner product is denoted by $(\cdot,\cdot)_{L^2}$. 

For $k \in \NN$, we denote by $H^k(\Omega)$ and $\dot{H}^k(\Omega)$ the classical and homogeneous Sobolev spaces of measurable functions $f$ from $\Omega$ to $\R$ (or $\RR^2)$ with weak derivatives of order $k$ such that  
\be 
\|f\|_{H^k}^2 = \sum\limits_{|\alpha| \le k} \|D^{\alpha}f\|_{L^2}^2 < \infty,
\ee and 
\be 
\|f\|_{\dot{H^k}}^2 = \sum\limits_{|\alpha| = k} \|D^{\alpha}f\|_{L^2}^2 < \infty
\ee respectively. 

Since we also work with Navier-Stokes equation, we introduce the space
$$
V := \{v \in (H^1_0(\Omega))^2 \;|\; \nabla\cdot v = 0\},
$$
where the space $H_{0}^{1}(\Omega)$ refers to the subspace of $H^1(\Omega)$ consisting of functions with homogeneous Dirichlet boundary conditions. 

For a Banach space $(X, \|\cdot\|_{X})$ and $p\in [1,\infty]$, we consider the Lebesgue spaces $ L^p(0,T; X)$ of functions $f$  from $X$ to $\R$ (or $\RR^2)$ satisfying 
\be 
\int_{0}^{T} \|f\|_{X}^p dt  <\infty
\ee with the usual convention when $p = \infty$. The corresponding norm will be denoted by $\|\cdot\|_{L^p(0, T; X)}$ or abbreviated as $\|\cdot\|_{L_t^p X}$. 

For a real number $p \in [1,\infty]$, we consider the space-time Lebesgue spaces $L^p((0,T) \times \Omega)$, equipped with the natural norm. The corresponding norms will be denoted by $\|\cdot\|_{L^p((0,T) \times \Omega)}$ or abbreviated as $\|\cdot\|_{L^p_{x,t}}$.

\subsection{Main Result} We consider a regularity exponent $r \ge 3$, a positive time $T$, and strictly positive small quantities $0 < \tilde{\epsilon} \le \bar{\epsilon} \le \epsilon \le 1$. For a smooth vector $\omega = (\rho, u)$, we define the Gevrey type norm 
\begin{equation}
\beg{aligned} \la{G1}
\Phi_T (\omega) = \sum\limits_{i + j + k > r}  \fr{(i+j+k)^r}{(i+j+k)!} \epsilon^i \tilde{\epsilon}^j \bar{\epsilon}^{k} \|t^{i+j+k-r}\pa_t^i \pa_2^j \pa_1^k \omega\|_{L^2((0,T) \times \Omega)}
+ \sum\limits_{i + j + k \le r} \|\pa_t^i \pa_2^j \pa_1^k \omega\|_{L^2((0,T) \times \Omega)}
\end{aligned} 
\end{equation} where $\pa_1$ stands for the tangential (horizontal) derivative and $\pa_2$ stands for the normal (vertical) derivative. All indices over which the sums are taken are assumed to be nonnegative integers.

Under suitable Sobolev regularity assumptions imposed on the initial data, there exists a local-in-time solution to the model \eqref{eq:ksns} that is analytic in both space and time:

\beg{Thm} \la{maintheorem} Let $r \ge 3$ be an integer. There exists $\epsilon, \tilde{\epsilon}, \bar{\epsilon} \in (0,1]$ depending only on $r$ such that for any initial scalar $\rho_0 \in H^{2r}(\Omega)$ and any initial divergence-free velocity field $u_0 \in H^{2r}(\Omega)$ satisfying the compatibility conditions, there exists a time $T_0$ depending on the $H^{2r}$ norm of the initial data and a solution $\omega:= (\rho, u)$ to the initial boundary value problem \eqref{eq:ksns} on the time interval $(0,T_0)$ such that the estimate  
\be 
\Phi_{T_0}(\omega) \le 1
\ee holds.  
\end{Thm}

\begin{rem}
    Besides the coupling via buoyancy, as studied in this work, the Keller-Segel equation can be coupled with the Navier-Stokes equation in various other ways. For example, another biologically interesting forcing for the Navier-Stokes equation is $\rho \nabla c$. Such forcing term models the friction in between the fluid and cell motions, see e.g. \cite{He2}. We remark that using estimates developed in this work, the alternative forcing term mentioned above can be similarly treated. Thus, we focus on the buoyancy forcing in this paper to reduce tedious technicalities.
\end{rem}

The presence of physical boundaries is a source of technical difficulty due to the absence of Fourier techniques. Further challenges arise from studying the analyticity of the Keller-Segel-Navier-Stokes system due to the nonlinear structure driven by $u \cdot \na u, u \cdot \na \rho$, and $\na \cdot (\rho \na c)$, as well as the nonlocal aspects driven by  $\na \cdot (\rho \na c)$. We exploit the elliptic regularity of the Poisson equation obeyed by $c$ to derive elliptic Gevrey estimates from which we obtain good control of the Gevrey norm of $\nabla c$ by that of $\rho$. This consequently deals with the nonlocality of the model. As for the nonlinearities, we use interpolation and combinatorial inequalities to bound the Gevrey norms of the nonlinear terms by time constant multiples of the Gevrey size of the solution itself, yielding consequently the boundedness of the latter over a short period of time. We believe this is the first result that addresses the simultaneous time-space analyticity of a nonlinear model equipped with Neumann boundary conditions using energy methods. 

This paper is organized as follows. In section \ref{S2}, we sketch an outline of the proof of Theorem \ref{maintheorem}. In section \ref{S3}, we address the local well-posedness of the Keller-Segel-Navier-Stokes model in Sobolev spaces. Section \ref{S4} is dedicated to elliptic Gevrey estimates. Finally, we derive in section \ref{S5} good control of the high-frequency Gevrey norms of the nonlinear terms governing the evolution of the system.

\section{Outline of the proof of Theorem \ref{maintheorem}.} \la{S2}
The proof of Theorem \ref{maintheorem} is divided into several major steps.

{\bf{Step 1. Local Regularity in Lebesgue Spaces.}} We prove the existence of a time $T^*$ depending on the $H^{2r}$ norm of the initial data $\omega_0$ such that the problem \eqref{eq:ksns} has a unique local solution  $\omega = (\rho, u)$ on the time interval $(0, T^*)$ satisfying
\be 
\sup\limits_{0 \le t \le T^*} \|\omega(t)\|_{H^{2r}(\Omega)} \le 2\|\omega_0\|_{H^{2r}(\Omega)}.
\ee We also deduce that $\omega = (\rho, u)$  obeys
\be 
\|u \cdot \na \rho\|_{L^2(0,T^*; H^{2r-2}(\Omega))} + \|\na \cdot (\rho \na c)\|_{L^2(0,T^*; H^{2r-2}(\Omega))}  \le CT^{\fr{1}{2}}\|\omega_0\|_{H^{2r}(\Omega)}^2 
\ee for some positive universal constant $C$ and any $T \in (0, T^*)$. This is addressed in section \ref{S3}.

{\bf{Step 2. Decomposition of the Gevrey Norm.}} Consider $T \le \min(1,T^*).$ We decompose the Gevrey norm $\Phi_T(\omega)$ as the sum 
\be 
\Phi_T(\omega) = \phi_{0,T} + \phi_{1,T}  + \phi_{2,T},
\ee where
\be 
\phi_{0,T} = \sum\limits_{i + j + k \le r} \|\pa_t^i \pa_2^j \pa_1^k \rho \|_{L^2((0,T) \times \Omega)}  + \sum\limits_{i + j + k \le r} \|\pa_t^i \pa_2^j \pa_1^k u\|_{L^2((0,T) \times \Omega)}
\ee 
\be 
\phi_{1,T} = \sum\limits_{i + j + k > r}  \fr{(i+j+k)^r}{(i+j+k)!} \epsilon^i \tilde{\epsilon}^j \bar{\epsilon}^{k} \|t^{i+j+k-r}\pa_t^i \pa_2^j \pa_1^k \rho\|_{L^2((0,T) \times \Omega)},
\ee and
\be 
\phi_{2,T}  = \sum\limits_{i + j + k > r}  \fr{(i+j+k)^r}{(i+j+k)!} \epsilon^i \tilde{\epsilon}^j \bar{\epsilon}^{k} \|t^{i+j+k-r}\pa_t^i \pa_2^j \pa_1^k u\|_{L^2((0,T) \times \Omega)}.
\ee 

{\bf{Step 2.1. Good Control of $\phi_{0,T}$.}} We prove that the following estimate
\be 
\phi_{0,T}  \le CT^{\fr{1}{2}} \|\omega_0\|_{H^{2r}(\Omega)} 
\ee holds for any time $T \in (0, T^*)$, where $C$ is a positive universal constant. 

{\bf{Step 2.2. Good Control of $\phi_{1,T}$.}} In the spirit of \cite{AW}, if we assert the following conditions on parameters $\eps, \tilde{\eps}, \bar{\eps}$:
\begin{equation}
    \label{neumanncond}
    \epsilon \le C_{KS},\quad \frac{\bar\epsilon^2}{\epsilon} + \frac{\bar\epsilon}{\epsilon^{1/2}} + \bar\epsilon \le C_{KS},\quad \frac{\tilde\epsilon^2 + \tilde\epsilon + \bar\epsilon\tilde\epsilon}{\epsilon} + \frac{\tilde\epsilon^2}{\epsilon^2} + \frac{\tilde\epsilon}{\epsilon^{1/2}} + \tilde\epsilon^2 + \tilde\epsilon \le C_{KS},\quad \frac{\tilde\epsilon^2}{\bar\epsilon^2} = \frac{1}{4}
\end{equation}
where $C_{KS}$ is a fixed constant only depending on $r$, then the following estimate holds:
\be 
\begin{aligned}
\phi_{1,T} 
&\le C\phi_{0,T}
+ C\|u \cdot \na \rho\|_{L^2(0,T; H^{{2r-2}}(\Omega))}
+ C\|\na \cdot (\rho \na c)\|_{L^2(0,T; H^{{2r-2}}(\Omega))}
\\&+ M_1 + M_2 + M_3 + M_4 + M_5 + M_6,
\end{aligned}
\ee where
\be 
M_1 = \sum\limits_{i + j + k \ge r-2} \fr{(i+j+k+1)^{r-1} \epsilon^i \tilde{\epsilon}^{j+1}\bar{\epsilon}^{k}}{(i+j+k+1)!} \|t^{i+j+k + 2-r} \pa_t^i \pa_2^{j} \pa_1^k (u \cdot \na \rho)  \|_{L^2((0,T) \times \Omega)},
\ee 
\be 
M_2 = \sum\limits_{i \ge r-1} \fr{(i+1)^{r-1} \epsilon^{i+1}}{(i+1)!} \|t^{i+2-r}\pa_t^i \pa_2 (u \cdot \na \rho)\|_{L^2((0,T) \times \Omega)},
\ee
\be 
M_3 = \sum\limits_{i \ge r} \fr{(i+1)^r \epsilon^{i+1} }{(i+1)!} \|t^{i+1-r} \pa_t^i (u \cdot \na \rho)\|_{L^2((0,T) \times \Omega)},
\ee
\be 
M_4 =  \sum\limits_{i + j + k \ge r-2} \fr{(i+j+k+1)^{r-1} \epsilon^i \tilde{\epsilon}^{j+1}\bar{\epsilon}^{k}}{(i+j+k+1)!} \|t^{i+j+k + 2-r} \pa_t^i \pa_2^{j} \pa_1^k \na \cdot (\rho \na c)  \|_{L^2((0,T) \times \Omega)},
\ee
\be 
M_5 = \sum\limits_{i \ge r-1} \fr{(i+1)^{r-1} \epsilon^{i+1}}{(i+1)!} \|t^{i+2-r}\pa_t^i \pa_2 \na \cdot (\rho  \na c)\|_{L^2((0,T) \times \Omega)},
\ee and
\be 
M_6 = \sum\limits_{i \ge r} \fr{(i+1)^r \epsilon^{i+1} }{(i+1)!} \|t^{i+1-r} \pa_t^i \na \cdot (\rho \na c)\|_{L^2((0,T) \times \Omega)}.
\ee We will show in Proposition \ref{prop:M} that 
\be 
\sum\limits_{i=1}^{6} M_i \le C(\rho_0, u_0, r)\left(T + T^{1/2}(\phi_{1,T} + \phi_{2,T}) + T^{r-5/2}\phi_{1,T}(\phi_{1,T} + \phi_{2,T})\right)
\ee for any sufficiently small time $T$. 

{\bf{Step 2.3. Good Control of $\phi_{2,T}$.}} In the spirit of \cite{kv}, if we assert that
\begin{equation}
    \label{dirichletcond}
    \epsilon + \bar\epsilon + \tilde\epsilon^{1/2} + \frac{\tilde\epsilon}{\epsilon} \le C_{NS},
\end{equation}
where $C_{NS}$ is a fixed constant only depending on $r$, then the following estimate holds:
\be 
\begin{aligned}
\phi_{2,T} 
&\le C\phi_{0,T}
+ C\|\rho\|_{L^2(0,T; H^{{2r-2}}(\Omega))}
+ C\|u \cdot \na u\|_{L^2(0,T; H^{{2r-2}}(\Omega))}
\\&+ N_1 + N_2 + N_3 + N_4 + N_5 + N_6 
\end{aligned}
\ee where 
\be 
N_1 = \sum\limits_{i+j+k \ge r-2} \fr{(i+j+k+2)^r \epsilon^i \tilde{\epsilon}^{j+2} \bar{\epsilon}^k}{(i+j+k+2)!} \|t^{i+j+k+2-r} \pa_t^i \pa_2^j \pa_1^k (u \cdot \na u)\|_{L^2((0,T) \times \Omega)} 
\ee 
\be 
N_2 = \sum\limits_{i+k \ge r-2} \fr{(i+k+2)^r \epsilon^i \bar{\epsilon}^{k+2}}{(i+k+2)!} \|t^{i+k+2-r} \pa_t^i \pa_1^k (u \cdot \na u)\|_{L^2((0,T) \times \Omega)} 
\ee 
\be 
N_3 =  \sum\limits_{i\ge r-1} \fr{(i+1)^r \epsilon^{i+1} }{(i+1)!} \|t^{i+1-r} \pa_t^i  (u \cdot \na u)\|_{L^2((0,T) \times \Omega)} 
\ee 
\be 
N_4 = \sum\limits_{i+j+k \ge r-2} \fr{(i+j+k+2)^r \epsilon^i \tilde{\epsilon}^{j+2} \bar{\epsilon}^k}{(i+j+k+2)!} \|t^{i+j+k+2-r} \pa_t^i \pa_2^j \pa_1^k  \rho\|_{L^2((0,T) \times \Omega)} 
\ee 
\be 
N_5 = \sum\limits_{i+k \ge r-2} \fr{(i+k+2)^r \epsilon^i \bar{\epsilon}^{k+2}}{(i+k+2)!} \|t^{i+k+2-r} \pa_t^i \pa_1^k \rho \|_{L^2((0,T) \times \Omega)} 
\ee 
\be 
N_6 = \sum\limits_{i \ge r-1} \fr{(i+1)^r \epsilon^{i+1}}{(i+1)!} \|t^{i+1-r} \pa_t^i  \rho\|_{L^2((0,T) \times \Omega)} 
\ee 
We will show in Proposition \ref{prop:N} that 
\be 
\sum\limits_{i=1}^{6} N_i \le C\left(\phi_{0,T}^{\fr{1}{2}} \Phi_T^{\fr{1}{2}} + T^{\fr{1}{2}} \Phi_T^2 \right)
+ C T\phi_{1,T} + C \phi_{0,T} 
\ee for any sufficiently small time $T$.
\begin{rem}
    From now on, we by default assert that parameters $\epsilon,\bar\eps,\tilde\eps$ satisfy the \textbf{standing assumption}, namely \eqref{neumanncond} and \eqref{dirichletcond}. In fact, it is not hard for one to construct a triplet $(\epsilon, \bar\epsilon, \tilde\eps)$ which satisfies the standing assumption given $r, C_{KS}, C_{NS}$. For example, we may let $C_0 = \min(C_{KS}, C_{NS}, 1)$, and set $\epsilon = C_0/2$, $\bar\epsilon = C_0^2/100$, $\tilde\eps = C_0^2/200$.
\end{rem}

\begin{rem}
    We remark that in Step 2.2 and Step 2.3 above, the choice of constants $C_{KS}$, $C_{NS}$ originates from the linear theory developed in \cite{AW} and \cite{kv} respectively. These two works deal with the half-space $\R^2 \times \R^+$, which does not quite match our setting. However, the only domain-dependent arguments in those works are Calder\'on-Zygmund type elliptic estimates, which still hold in our setting. Therefore, the results in \cite{AW,kv} apply to the domain $\Omega = \TT \times [0,\pi]$, and this justifies our choice of constants $C_{KS}$, $C_{NS}$.
\end{rem}

{\bf{Step 3. Conclusion.}} There exists a small time $T_0$ depending only on the size of the initial data in $H^{2r}(\Omega)$ such that 
\be 
\Phi_T(\omega) \le 1
\ee for all $T \in (0,T_0).$ Indeed, for any $T \in (0,1)$, it follows from the previous steps that 
\begin{equation} \la{bn}
\Phi_T \le C_0 \sqrt{T} (\Phi_T + \Phi_{T}^2 + 1)
\end{equation} for some constant $C_0$ depending on the $H^{2r}(\Omega)$ regularity of the initial data. The relation \eqref{bn} yields the bound
$$\Phi_T \le 1 $$ for any $T \le (1/3C_0)^2$. If not, then there exists a time $T^* \in (0, (1/3C_0)^2]$ such that for any $T < T^*$, it holds that $\Phi_T < 1$ and $\Phi_{T^*} = 1$. Using \eqref{bn} at time $T^*$, we infer that 
$1 < 3C_0 \sqrt{T*}  \le 1$ and obtain a contradiction. 

We point out that all the estimates in this paper are not quite rigorous. In fact, one can truncate the infinite sums in $\Phi_T$ (which we denote by $\Phi_T^N$), observe that $\Phi_T^N$ are well-defined due to parabolic regularity, perform the uniform-in-$N$ Gevrey computations on $\Phi_T^N$, deduce that $\Phi_T^N$ are uniformly bounded by 1 for all times $T \in (0, (1/(3C_0)^2]$, and finally let $N \rightarrow \infty$. We disregard this technicality for the sake of simplicity. 

\section{Local Regularity in Lebesgue Spaces} \la{S3}
In this section, we remark briefly on the local well-posedness for \eqref{eq:ksns}--\eqref{5} in the scale of Sobolev spaces and establish the boundedness of the Sobolev part of $\Phi_T(\omega)$, namely $\phi_{0,T}$. Let us fix $r \ge 3$. For the rest of this section, we assume that the initial data $\rho_0 \in H^{2r}(\Omega)$, $u_0 \in H^{2r}(\Omega) \cap V$ satisfy appropriate compatibility conditions (see e.g. Section 2.4, Chapter 5 in \cite{BF}).  Using a similar approach to that in \cite{HK}, one can show the existence of a time 
$$
T^* = T^*(\|\rho_0\|_{H^{2r}(\Omega)}) > 0
$$
and a unique solution $(\rho, u)$ satisfying the following regularity properties:
$$
\rho \in L^\infty(0,T^*;H^1(\Omega))\cap L^2(0,T^*;H^2(\Omega)),
$$
$$ u \in L^\infty(0,T^*;V)\cap L^2(0,T^*;H^2(\Omega) \cap V),
$$
$$
\p_t \rho, \p_t u \in L^2(0,T^*;L^2(\Omega)),
$$
and
$$
\rho, u \in C^\infty((0,T^*] \times \Omega).
$$ 

In particular, one can prove the following quantitative estimates for higher-order regularity:

\begin{prop}
\label{prop:Sobolev}
Assume that $(\rho,u)$ is a solution to problem \eqref{eq:ksns} with initial data $\rho_0 \in H^{2r}(\Omega)$, $u_0 \in H^{2r}(\Omega) \cap V$ satisfying compatibility conditions. Then the following bounds hold:
\begin{align}
    \sup_{t \in [0,T^*]}\left(\|\p_t^l\rho(t,\cdot)\|_{H^{2r-2l}(\Omega)}^2 + \|\p_t^l u(t,\cdot)\|_{H^{2r-2l}(\Omega)}^2\right) &\le C(\|\rho_0\|_{H^{2r}(\Omega)},\|u_0\|_{H^{2r}(\Omega)},r),\label{est:regest1}
\end{align}
for $0 \le l \le r.$
\end{prop}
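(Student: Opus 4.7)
The plan is to establish the bound by an induction on $l$, combining standard parabolic energy estimates with an elliptic regularity bootstrap that trades two spatial derivatives for one time derivative.

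First I would handle the base case $l=0$ through the $H^{2r}$ a priori estimate underlying the local well-posedness theory: apply tangential and mixed derivatives $D^\alpha$ with $|\alpha|\le 2r$ to \eqref{eq:ksns}, test the density equation against $D^\alpha \rho$ and the velocity equation against $D^\alpha u$, and integrate by parts to produce the parabolic dissipation $\|\nabla D^\alpha \rho\|_{L^2}^2$ and $\|\nabla D^\alpha u\|_{L^2}^2$. The transport terms $u\cdot\nabla\rho$ and $u\cdot\nabla u$ are controlled via the algebra property of $H^{2r-1}(\Omega)$, valid in two dimensions for $r\ge 2$, while the nonlocal chemotactic term $\nabla\cdot(\rho\nabla c)$ is handled using the elliptic estimate $\|c\|_{H^{2r+1}(\Omega)}\le C\|\rho-\bar\rho\|_{H^{2r-1}(\Omega)}$, which follows from the Poisson equation $-\Delta c=\rho-\bar\rho$ with zero-mean Neumann data. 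The pressure is eliminated by the divergence-free condition upon integration by parts. A Gronwall argument on the resulting differential inequality then yields a uniform $H^{2r}$ bound on the existence interval $[0,T^*]$.

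For the inductive step, assume the bound for all nonnegative integers strictly less than $l$. Rewriting the equations so that the diffusion term is isolated, one obtains
\begin{equation*}
-\Delta \p_t^{l-1}\rho = -\p_t^l\rho - \p_t^{l-1}(u\cdot\nabla\rho) - \p_t^{l-1}\nabla\cdot(\rho\nabla c),
\end{equation*}
\begin{equation*}
-\Delta \p_t^{l-1}u + \nabla\p_t^{l-1}p = -\p_t^l u - \p_t^{l-1}(u\cdot\nabla u) + (0,\p_t^{l-1}\rho)^T.
\end{equation*}
Since the Neumann condition $\nabla\rho\cdot n=0$ and the no-slip condition $u=0$ on $\p\Omega$ are preserved under time differentiation, elliptic regularity for the Laplacian with Neumann data and for the Stokes operator with Dirichlet data give
\begin{equation*}
\|\p_t^{l-1}\rho\|_{H^{2r-2(l-1)}} + \|\p_t^{l-1}u\|_{H^{2r-2(l-1)}} \le C\bigl(\|\p_t^l\rho\|_{H^{2r-2l}} + \|\p_t^l u\|_{H^{2r-2l}} + \mathcal{R}_l\bigr),
\end{equation*}
where $\mathcal{R}_l$ collects the $H^{2r-2l}$ norms of the nonlinear and nonlocal terms and is controlled inductively. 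The remaining quantities $\|\p_t^l\rho\|_{H^{2r-2l}}$ and $\|\p_t^l u\|_{H^{2r-2l}}$ are produced by the analogue of the $l=0$ energy argument applied to the equations after they have been differentiated $l$ times in time; the Leibniz expansions of $\p_t^l(u\cdot\nabla\rho)$, $\p_t^l(u\cdot\nabla u)$, and $\p_t^l \nabla\cdot(\rho\nabla c)$ are estimated factor by factor by the inductive hypothesis, again exploiting the Sobolev algebra and the two-derivative gain for $c$.

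The main technical hurdle will be two-fold. First, one must verify that the compatibility conditions on $(\rho_0,u_0)$ produce finite values of $\|\p_t^l\rho(0,\cdot)\|_{H^{2r-2l}}$ and $\|\p_t^l u(0,\cdot)\|_{H^{2r-2l}}$; this is done by repeatedly substituting the PDE to rewrite each initial time derivative as a purely spatial polynomial expression in the initial data involving at most $2r$ spatial derivatives, which is admissible by the assumed $H^{2r}$ regularity of $(\rho_0,u_0)$. Second, the chemotactic product $\rho\nabla c$ mixes quantities at different regularity levels, so one must use the elliptic gain $\|c\|_{H^{k+2}} \lesssim \|\rho\|_{H^k}$ at every stage to place the two factors in complementary algebras and avoid a loss of derivatives. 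With those two points secured, the induction closes and delivers the uniform control on $\p_t^l\rho$ and $\p_t^l u$ in $H^{2r-2l}$ asserted in \eqref{est:regest1}.
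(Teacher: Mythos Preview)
Your overall scheme --- energy estimates plus an elliptic bootstrap trading two spatial derivatives for one $\p_t$ --- is the right philosophy, and the paper has essentially the same idea in mind. It does not spell out a proof but defers to \cite{HK} and \cite{BF}; the remark following the proposition singles out the crucial point, namely that time differentiation preserves the Neumann condition on $\rho$ (and the no-slip condition on $u$), so that one should test \emph{time} derivatives in the energy estimates.

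This is precisely where your base case has a gap. You propose to apply $D^\alpha$ with $|\alpha|\le 2r$ and test against $D^\alpha\rho$ and $D^\alpha u$. But once $D^\alpha$ contains two or more normal derivatives, $D^\alpha\rho$ no longer satisfies either Neumann or Dirichlet data on $\p\Omega$, and for the velocity even a single normal derivative of $u$ fails to vanish there; the integration by parts against $-\Delta$ (and against $\nabla p$) then produces boundary terms that cannot be absorbed, so the energy inequality does not close. The same obstruction recurs in your inductive step, since ``the analogue of the $l=0$ energy argument'' still calls for $2r-2l$ spatial derivatives in the test function. The fix, and the route the paper indicates via \cite{HK}, is to reverse the direction of the induction: carry out $L^2$- (or $H^1$-) level energy estimates on $\p_t^l(\rho,u)$ for $0\le l\le r$, which close cleanly because $\p_t^l\rho$ and $\p_t^l u$ retain the original boundary conditions, and then descend from $l=r$ to $l=0$ via the elliptic/Stokes identities you already displayed, gaining two spatial derivatives at each step. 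Your inequality bounding $\|\p_t^{l-1}\rho\|_{H^{2r-2(l-1)}}$ by $\|\p_t^l\rho\|_{H^{2r-2l}}$ plus lower-order terms is exactly the tool for this descent; in your current upward scheme it appears only as a redundant consequence of the inductive hypothesis.
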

\begin{rem}
    We point out several modifications that one has to make when adapting the proof in \cite{HK} to prove the local well-posedness of \eqref{eq:ksns}--\eqref{5} and Proposition \ref{prop:Sobolev}. While still using Galerkin approximation to construct a solution, we need to use the Neumann basis to build approximated solutions to $\rho$. Moreover, time differentiation $\p_t$ still keeps the zero Neumann boundary condition intact i.e. $\p_t \rho$ still satisfies homogeneous Neumann boundary conditions. This means that the approach in \cite{HK} of testing time derivatives in energy estimates carries over to our case. Finally, we need to treat related terms generated by the extra advection term $u\cdot\nabla u$ as we study Navier-Stokes equation in our case. The treatment of such terms, however, are standard in the study of local well-posedness of 2D Navier-Stokes equations with forcing. We refer the interested readers to the proof of Theorem V.2.10. in \cite{BF} for related details.
\end{rem}

Moreover, a bound for both $\phi_{0,T}$ and chemical gradient $\nabla c$ emerges as a direct corollary of Proposition \ref{prop:Sobolev} as follows:

\begin{cor}
    With all assumptions in Proposition \ref{prop:Sobolev}, we have the following estimate:
    \begin{equation}
        \label{est:phi0}
        \sum_{i+j+k \le r}\|\p_t^i\p_2^j\p_1^k\nabla c\|_{L^2_{x,t}} + \phi_{0,T} \le C(\|\rho_0\|_{H^{2r}(\Omega)},\|u_0\|_{H^{2r}(\Omega)}, r)T^{1/2},
    \end{equation}
    for any $T \in (0, T^*)$.
\end{cor}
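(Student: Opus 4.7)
The plan is to reduce both contributions on the left-hand side to the uniform-in-time $H^{2r-2l}$ bounds supplied by Proposition \ref{prop:Sobolev}, and then extract the $T^{1/2}$ factor by a trivial integration in time over $(0,T) \subset (0,T^*)$. Because the Sobolev exponents $2r-2l$ in \eqref{est:regest1} are generous relative to the cap $r$ appearing in $\phi_{0,T}$ and in the $\nabla c$ sum, every mixed derivative that needs to be estimated will fit comfortably inside the appropriate Sobolev norm.

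For the $\phi_{0,T}$ piece, I would fix nonnegative integers $(i,j,k)$ with $i+j+k \le r$ and use the pointwise-in-$t$ embedding
$$
\|\p_t^i \p_2^j \p_1^k \rho(t,\cdot)\|_{L^2(\Omega)} \le \|\p_t^i \rho(t,\cdot)\|_{H^{j+k}(\Omega)} \le \|\p_t^i \rho(t,\cdot)\|_{H^{2r-2i}(\Omega)},
$$
valid since $j+k \le r-i \le 2r-2i$, and analogously for $u$. Combining this with \eqref{est:regest1} and integrating in $t$ over $(0,T)$ yields
$$
\|\p_t^i \p_2^j \p_1^k \rho\|_{L^2_{x,t}} + \|\p_t^i \p_2^j \p_1^k u\|_{L^2_{x,t}} \le C(\|\rho_0\|_{H^{2r}}, \|u_0\|_{H^{2r}}, r)\, T^{1/2}.
$$
Summing over the finitely many admissible triples $(i,j,k)$ controls $\phi_{0,T}$ by the desired bound.

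For the $\nabla c$ contribution, I would time-differentiate the elliptic equation to obtain $-\Delta(\p_t^i c) = \p_t^i \rho - \p_t^i \bar\rho$ with homogeneous Neumann boundary conditions. Mass conservation, obtained by integrating the first equation of \eqref{eq:ksns} against $1$ and using the boundary conditions, ensures that $\p_t^i \bar\rho = 0$ for $i \ge 1$, so no compatibility obstruction arises when inverting the Neumann Laplacian. Elliptic regularity on the periodic strip $\Omega$ then gives, for every $s \ge 0$,
$$
\|\nabla \p_t^i c(t,\cdot)\|_{H^{s}(\Omega)} \le C\|\p_t^i \rho(t,\cdot)\|_{H^{s}(\Omega)}.
$$
Specializing to $s = j+k \le r-i \le 2r-2i$, invoking \eqref{est:regest1} uniformly in $t \in [0,T^*]$, integrating over $(0,T)$, and summing over the admissible triples closes the proof.

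The whole argument is essentially Proposition \ref{prop:Sobolev} plus short-time integration plus an elliptic transfer from $\rho$ to $\nabla c$, so I do not foresee a genuine obstacle. The mildest technical point is the justification of the Calder\'on–Zygmund type Neumann estimate on $\Omega$, which is standard — for instance, via an even reflection across the flat pieces of $\p\Omega$ that reduces the problem to a fully periodic one — and the preservation of the Neumann boundary condition under $\p_t^i$, which is immediate from the smoothness of $(\rho,u)$ on $(0,T^*] \times \Omega$ guaranteed by the local well-posedness theory.
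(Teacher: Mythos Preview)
Your proposal is correct and follows essentially the same approach as the paper: both bound $\phi_{0,T}$ by embedding $\p_t^i\p_2^j\p_1^k$ into $\|\p_t^i(\cdot)\|_{H^{2r-2i}}$ and invoking \eqref{est:regest1}, then handle $\nabla c$ via elliptic regularity for the time-differentiated Poisson problem. The only cosmetic difference is that the paper splits $\nabla c$ into $\p_1 c$ and $\p_2 c$ and treats the latter through the induced Dirichlet problem for $\p_t^i\p_2 c$, whereas you apply Neumann regularity directly to $\p_t^i c$; both routes yield the same bound.
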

\begin{proof}
    To show \eqref{est:phi0}, it suffices to consider the highest order terms, namely the space-time $L^2$ norms of 
    $
    \p_t^i\p_2^j\p_1^k \rho$ and $\p_t^i\p_2^j\p_1^k u
    $
    for $i + j + k = r$. Observe that
    \begin{align*}
        \|\p_t^i\p_2^j\p_1^k \rho\|_{L^2_{x,t}([0,T] \times \Omega)}^2 &= \int_{0}^{T}\|\p_t^i\p_2^j\p_1^k \rho(t)\|_{L^2(\Omega)}^2 dt \le \int_{0}^{T}\|\p_t^i \rho(t)\|_{H^{j+k}(\Omega)}^2 dt\\
        &\le \int_{0}^{T}\|\p_t^i \rho(t)\|_{H^{2(j+k)}(\Omega)}^2 dt \le \int_{0}^{T}\|\p_t^i \rho(t)\|_{H^{2r - 2i}(\Omega)}^2 dt\\
        &\le C(\|\rho_0\|_{H^{2r}(\Omega)},\|u_0\|_{H^{2r}(\Omega)}, r)T,
    \end{align*}
    where we used \eqref{est:regest1} in the final inequality. A similar argument also yields $$
    \|\p_t^i\p_2^j\p_1^k u\|_{L^2_{x,t}([0,T] \times \Omega)}^2 \le C(\|\rho_0\|_{H^{2r}(\Omega)},\|u_0\|_{H^{2r}(\Omega)}, r)T.
    $$
    We have thus shown the desired bound for $\phi_{0,T}$. To bound norms corresponding to $\nabla c$, we recall that $c$ verifies the following Neumann problem:
        $$
        -\Delta c = \rho - \bar\rho,\quad \p_2 c|_{\p\Omega} = 0.
        $$
        Taking $\p_t^i\p_2$ on both sides of the Poisson equation, we note that $\p_t^i\p_2 c$ satisfies a Poisson equation equipped with zero Dirichlet boundary condition:
        $$
        -\Delta(\p_t^i\p_2 c) = \p_t^i\p_2 \rho,\quad \p_t^i\p_2 c|_{\p\Omega} = 0.
        $$
        By standard elliptic estimates, we obtain
        \begin{align*}
            \|\p_t^i\p_2^j\p_1^k \p_2 c\|_{L^2(\Omega)} &\le \|\p_t^i\p_2 c\|_{H^{j+k}(\Omega)} \le C\|\p_t^i\p_2 \rho\|_{H^{j+k-2}(\Omega)}\\
            &\le C\|\p_t^i \rho\|_{H^{j+k-1}(\Omega)}.
        \end{align*}
        Then a similar argument as how we treat $\phi_{0,T}$ would yield
        $$
        \|\p_t^i\p_2^j\p_1^k \p_2 c\|_{L^2_{x,t}([0,T] \times \Omega)}^2 \le C(\|\rho_0\|_{H^{2r}(\Omega)},\|u_0\|_{H^{2r}(\Omega)}, r)T.
        $$
        Similarly, we observe that $\p_t^i\p_1 c$ verifies the Neumann problem
        $$
        -\Delta(\p_t^i\p_1 c) = \p_t^i\p_1 \rho,\quad \p_t^i\p_1\p_2 c|_{\p\Omega} = 0,
        $$
        where $\int_\Omega \p_t^i\p_1 \rho dx = 0$. Thus, a similar argument to the above yields
        $$
        \|\p_t^i\p_2^j\p_1^k \p_1 c\|_{L^2_{x,t}([0,T] \times \Omega)}^2 \le C(\|\rho_0\|_{H^{2r}(\Omega)},\|u_0\|_{H^{2r}(\Omega)}, r)T,
        $$
        and the proof is complete.
        
\end{proof}

Finally, we show Sobolev estimates of the following nonlinear terms:
\begin{lem}
    Let $(\rho,u)$ be the solution to \eqref{eq:ksns} on $(0, T^*)$. Then the following estimates hold:
    \begin{equation}
    \begin{aligned}
        \label{est:lowSob}
        &\|u\cdot \nabla \rho\|_{L^2(0,T; H^{2r-2}(\Omega))}+ \|u\cdot \nabla u\|_{L^2(0,T; H^{2r-2}(\Omega))}+ \|\nabla \cdot (\rho\nabla c)\|_{L^2(0,T; H^{2r-2}(\Omega))} 
        \\&\quad\quad\le C(\|\rho_0\|_{L^1(\Omega)},\|\rho_0\|_{H^{2r}(\Omega)},\|u_0\|_{H^{2r}(\Omega)})T^{1/2}
        \end{aligned}
    \end{equation}
    for any $T \in (0, T^*)$.
\end{lem}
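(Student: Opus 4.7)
The plan is to reduce the estimate to a uniform pointwise-in-time $H^{2r-2}(\Omega)$ bound for each of the three nonlinear quantities; squaring and integrating over $(0,T)$ then yields the factor $T$, and a final square root gives the desired $T^{1/2}$. The crucial analytic fact is that $H^{2r-2}(\Omega)$ is a Banach algebra: since $r \ge 3$ we have $2r-2 \ge 4 > 1 = \dim\Omega/2$, so products of two $H^{2r-2}$ functions remain in $H^{2r-2}$ with norms multiplying.

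The two differential nonlinearities are straightforward. Using the algebra property,
\[
\|u \cdot \nabla \rho\|_{H^{2r-2}} \le C \|u\|_{H^{2r-2}}\|\nabla \rho\|_{H^{2r-2}} \le C\|u\|_{H^{2r-2}}\|\rho\|_{H^{2r-1}},
\]
and the analogous inequality controls $\|u\cdot \nabla u\|_{H^{2r-2}} \le C\|u\|_{H^{2r-2}}\|u\|_{H^{2r-1}}$. Both right-hand sides are bounded uniformly in $t \in [0,T^*]$ by a constant depending on the $H^{2r}$ norms of the initial data, via Proposition \ref{prop:Sobolev} with $l=0$.

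For the nonlocal term I would expand, using the Poisson equation for $c$,
\[
\nabla \cdot (\rho\nabla c) = \nabla\rho \cdot \nabla c + \rho\,\Delta c = \nabla\rho\cdot\nabla c - \rho(\rho - \bar\rho).
\]
The quadratic contribution $\rho(\rho - \bar\rho)$ is controlled in $H^{2r-2}$ by $C(\|\rho\|_{H^{2r-2}}^2 + |\bar\rho|\,\|\rho\|_{H^{2r-2}})$, and $|\bar\rho|$ is dominated by the conserved $L^1$-mass $\|\rho_0\|_{L^1}/|\Omega|$. For the cross term $\nabla\rho \cdot \nabla c$, I would combine the algebra property with elliptic regularity for the Neumann Poisson problem: since $\rho - \bar\rho$ has zero mean, $c$ is determined up to an additive constant while $\nabla c$ is uniquely defined, and the standard estimate yields
\[
\|\nabla c\|_{H^{2r-1}} \le C\|\rho - \bar\rho\|_{H^{2r-3}} \le C\bigl(\|\rho\|_{H^{2r-3}} + \|\rho_0\|_{L^1}\bigr).
\]
Combined with $\|\nabla \rho\|_{H^{2r-2}} \le \|\rho\|_{H^{2r-1}}$, this closes the $H^{2r-2}$ bound.

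Having obtained uniform-in-time $H^{2r-2}$ estimates on all three quantities, I would square them, integrate over $(0,T)$ to pick up a factor of $T$, and take square roots to conclude the bound by $CT^{1/2}$ with a constant depending on $\|\rho_0\|_{L^1}$, $\|\rho_0\|_{H^{2r}}$, and $\|u_0\|_{H^{2r}}$. The only mild obstacle is keeping track of $\bar\rho$ in the elliptic step and appealing to the $L^1$ conservation of $\rho$; otherwise, this reduces entirely to Sobolev multiplication and Proposition \ref{prop:Sobolev}.
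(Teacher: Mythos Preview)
Your argument is correct and follows essentially the same route as the paper: Sobolev algebra in $H^{2r-2}$ (or $H^{2r-1}$), elliptic regularity for the Neumann problem to control $\nabla c$, Proposition~\ref{prop:Sobolev} for the uniform-in-time bounds, and time integration for the $T^{1/2}$. The only cosmetic difference is that the paper bounds $\|\nabla\cdot(\rho\nabla c)\|_{H^{2r-2}} \le \|\rho\nabla c\|_{H^{2r-1}}$ and applies the algebra property there, whereas you expand the divergence first; also note your elliptic exponent is off by one (it should read $\|\nabla c\|_{H^{2r-1}} \le C\|\rho-\bar\rho\|_{H^{2r-2}}$, or equivalently $\|\nabla c\|_{H^{2r-2}} \le C\|\rho-\bar\rho\|_{H^{2r-3}}$), but this is harmless here.
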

\begin{proof}
    Since $2r - 2 \ge 2 > 1$ as $r \ge 3$, we may use the algebra property of $H^{2r-2}$ to treat the first term on the LHS by:
    \begin{align*}
        \|u\cdot \nabla \rho\|_{L^2(0,T; H^{2r-2}(\Omega))}^2 &\le \int_{0}^T \|u\|_{H^{2r-2}(\Omega)}^2\|\rho\|_{H^{2r-1}(\Omega)}^2 dt\\
        &\le \left(\sup_{0 \le t \le T^*}\|\rho\|_{H^{2r}(\Omega)}^2 \right)\left(\sup_{0 \le t \le T^*}\|u\|_{H^{2r}(\Omega)}^2\right)T\\
        &\le C(\|\rho_0\|_{H^{2r}(\Omega)},\|u_0\|_{H^{2r}(\Omega)})T,
    \end{align*}
    where we used \eqref{est:regest1} in the final inequality. The term $\|u\cdot \nabla u\|_{L^2(0,T; H^{2r-2}(\Omega))}$ can be treated in a similar way. To estimate the third term, we observe that
    \begin{align*}
        \|\nabla \cdot (\rho\nabla c)\|_{L^2(0,T; H^{2r-2}(\Omega))}^2 &\le \int_{0}^T\|\rho\nabla c\|_{H^{2r-1}(\Omega)}^2 dt\\
        &\le \int_{0}^T\|\rho\|_{H^{2r-1}(\Omega)}^2\|\nabla c\|_{H^{2r-1}(\Omega)}^2 dt.
    \end{align*}
    By standard elliptic estimates for the Neumann problem, we notice that
    $$
    \|\nabla c\|_{H^{2r-1}(\Omega)} \le \|c\|_{H^{2r}(\Omega)} \le C \|\rho - \rho_M\|_{H^{2r-2}(\Omega)} \le C(\|\rho_0\|_{L^1(\Omega)})\|\rho\|_{H^{2r-2}(\Omega)},
    $$
    where we also used the fact that $\rho_M$ is conserved. Thus, we can continue to estimate the third term by:
    \begin{align*}
        \|\nabla \cdot (\rho\nabla c)\|_{L^2(0,T; H^{2r-2}(\Omega))}^2 &\le C(\|\rho_0\|_{L^1(\Omega)})\left(\sup_{0 \le t \le T^*}\|\rho\|_{H^{2r}(\Omega)}\right)^4(T - 0) \\&\le C(\|\rho_0\|_{L^1(\Omega)},\|\rho_0\|_{H^{2r}(\Omega)},\|u_0\|_{H^{2r}(\Omega)})T.
    \end{align*}
    The proof is completed after we collect all estimates above.
    \end{proof}
\beg{rem} In this work, we assume that the compatibility conditions are satisfied for the sake of simplicity. If we only want to show space-time analyticity in $[t_0,T^*] \times \Omega$ for arbitrary $t_0 > 0$, we may relax the regularity of initial data to be $\rho_0 \in H^1(\Omega), u_0 \in H^1_0 \cap V$. In fact, due to parabolic smoothing, we may evolve \eqref{eq:ksns} to time $t_0/2$ and apply Theorem \ref{maintheorem} with intial data $\rho(t_0/2, x), u(t_0/2,x)$. 
\end{rem}

\section{Gevrey Elliptic Regularity} \la{S4}
In this section, we address the Gevrey regularity of solutions to the Poisson equation relating chemical density $c$ and cell density $\rho$, namely:
\begin{equation}
    \label{poisson}
    -\Delta c = \rho - \bar\rho,
\end{equation}
equipped with homogeneous Neumann boundary conditions.

\beg{prop} \label{ellipticgevreycontrol} Let $\epsilon, \tilde{\epsilon},  \bar{\epsilon}$ satisfy the standing assumption, then there exists a positive constant $C$ depending only on $r$ such that  
\begin{equation} \label{est:gevreyellip}
\beg{aligned}
&\sum\limits_{i + j + k > r} \frac{(i+j+k)^r}{(i+j+k)!} \epsilon^i \tilde{\epsilon}^j \bar{\epsilon}^k \|t^{i+j+k-r} \pa_t^i \pa_2^j \pa_1^k \na c\|_{L^2_{x,t}}
\\&\quad\quad\quad\quad\le C\left(1 + T\tilde{\epsilon} + \fr{T\tilde{\epsilon}^2}{\bar{\epsilon}} \right)\sum\limits_{i+j+k > r} \frac{(i+j+k)^r}{(i+j+k)!} \epsilon^i \tilde{\epsilon}^j \bar{\epsilon}^k \|t^{i+j+k-r} \pa_t^i \pa_2^j \pa_1^k \rho\|_{L^2_{x,t}}
\end{aligned}
\end{equation} holds. 
\end{prop}

\begin{proof} Let $\mathcal{I}$ be the sum
\be 
\mathcal{I} = \sum\limits_{i + j + k > r} \frac{(i+j+k)^r}{(i+j+k)!} \epsilon^i \tilde{\epsilon}^j \bar{\epsilon}^k \|t^{i+j+k-r} \pa_t^i \pa_2^j \pa_1^k \na c\|_{L_{x,t}^2}.
\ee We bound $\mathcal{I}$ by the sum $\mathcal{I}_1 + \mathcal{I}_2$ where
\be 
\mathcal{I}_1 = \sum\limits_{i + j + k > r} \frac{(i+j+k)^r}{(i+j+k)!} \epsilon^i \tilde{\epsilon}^j \bar{\epsilon}^k \|t^{i+j+k-r} \pa_t^i \pa_2^{j+1} \pa_1^k  c\|_{L_{x,t}^2}
\ee and 
\be 
\mathcal{I}_2 = \sum\limits_{i + j + k > r} \frac{(i+j+k)^r}{(i+j+k)!} \epsilon^i \tilde{\epsilon}^j \bar{\epsilon}^k \|t^{i+j+k-r} \pa_t^i \pa_2^{j} \pa_1^{k+1} c\|_{L_{x,t}^2}.
\ee In order to estimate $\mathcal{I}_1$, we decompose it into 
\be 
\mathcal{I}_1 = \mathcal{I}_{1,1} + \mathcal{I}_{1,2}
\ee where
\be 
\mathcal{I}_{1,1} 
= \sum\limits_{i + j + k > r, j \ge 1} \frac{(i+j+k)^r}{(i+j+k)!} \epsilon^i \tilde{\epsilon}^j \bar{\epsilon}^k \|t^{i+j+k-r} \pa_t^i \pa_2^{j+1} \pa_1^k c\|_{L_{x,t}^2}
\ee and
\be 
\mathcal{I}_{1,2} 
= \sum\limits_{i + k > r} \frac{(i+k)^r}{(i+k)!} \epsilon^i  \bar{\epsilon}^k \|t^{i+k-r} \pa_t^i \pa_1^k  \pa_2 c\|_{L_{x,t}^2}.
\ee 
By making use of the Poisson equation obeyed by $c$, we obtain the relation
\begin{equation} \label{pderel}
-\pa_2^2 c = \rho - \bar{\rho} + \pa_1^2 c
\end{equation} and use it to bound 
\be 
\beg{aligned}
\mathcal{I}_{1,1} 
&\le \sum\limits_{i + j + k > r, j \ge 1} \frac{(i+j+k)^r}{(i+j+k)!} \epsilon^i \tilde{\epsilon}^j \bar{\epsilon}^k \|t^{i+j+k-r} \pa_t^i \pa_2^{j-1} \pa_1^{k+2} c\|_{L_{x,t}^2} 
\\&\quad\quad+ \sum\limits_{i + j + k > r, j \ge 1} \frac{(i+j+k)^r}{(i+j+k)!} \epsilon^i \tilde{\epsilon}^j \bar{\epsilon}^k \|t^{i+j+k-r} \pa_t^i \pa_2^{j-1} \pa_1^k \rho\|_{L_{x,t}^2}
\end{aligned}
\ee via an application of the triangle inequality. A re-indexing $k \mapsto k+2, j \mapsto j-2$ yields 
\be 
\beg{aligned}
\mathcal{I}_{1,1} 
&\le \sum\limits_{i + j + k > r} \frac{(i+j+k)^r}{(i+j+k)!} \epsilon^i \tilde{\epsilon}^{j+2} \bar{\epsilon}^{k-2} \|t^{i+j+k-r} \pa_t^i \pa_2^{j+1} \pa_1^{k} c\|_{L_{x,t}^2} 
\\&\quad\quad+ T\sum\limits_{i + j + k > r} \frac{(i+j +k+1 )^r}{(i+j+ k+1)!} \epsilon^i \tilde{\epsilon}^{j+1} \bar{\epsilon}^k \|t^{i+j+k-r} \pa_t^i \pa_2^{j} \pa_1^k \rho\|_{L_{x,t}^2}
\\&= \fr{\tilde{\epsilon}^2}{\bar{\epsilon}^2}\sum\limits_{i + j + k > r} \frac{(i+j+k)^r}{(i+j+k)!} \epsilon^i \tilde{\epsilon}^{j} \bar{\epsilon}^{k} \|t^{i+j+k-r} \pa_t^i \pa_2^{j+1} \pa_1^{k} c\|_{L_{x,t}^2} 
\\&\quad\quad+ 2^{r-1} T\tilde{\epsilon} \sum\limits_{i + j + k > r} \frac{(i+j +k)^r}{(i+j+ k)!} \epsilon^i \tilde{\epsilon}^{j} \bar{\epsilon}^k \|t^{i+j+k-r} \pa_t^i \pa_2^{j} \pa_1^k \rho\|_{L_{x,t}^2}.
\end{aligned}
\ee 
To estimate $\mathcal{I}_{1,2}$, we apply $t^{i+k - r} \pa_t^i \pa_1^k$ on both sides of \eqref{poisson} to obtain the following Neumann boundary value problem
\be 
-\Delta (t^{i+k-r}\pa_t^i \pa_1^k c) = t^{i+k-r}\pa_t^i \pa_1^k \rho,
\ee 
\be 
t^{i+k-r}\pa_t^i \pa_1^k \pa_2 c|_{\pa \Omega} = 0. 
\ee Since $i + k > r > 0$, we must have either $i > 0$ or $k > 0$. In either case, we must have
$$
\int_\Omega t^{i+k-r}\pa_t^i \pa_1^k \rho dx = 0.
$$

Evoking elliptic estimates, we obtain the spatial Sobolev $H^1$ bound
\be 
\|t^{i+k-r}\pa_t^i \pa_1^k c\|_{H^1(\Omega)} \le C\|t^{i+k-r}\pa_t^i \pa_1^k \rho\|_{L^2(\Omega)},
\ee from which we infer that
\be 
\mathcal{I}_{1,2} \le C\sum\limits_{i+k > r} \fr{(i+k)^r}{(i+k)!} \epsilon^i \bar{\epsilon}^k \|t^{i+k - r} \pa_t^i \pa_1^k  \rho \|_{L^2(\Omega)}.
\ee Therefore, the term $\mathcal{I}_1$ can be controlled by
\be 
\beg{aligned}
\mathcal{I}_1 
\le \fr{\tilde{\epsilon}^2}{\bar{\epsilon}^2}\mathcal{I}_1 
+ C(1 + T\tilde{\epsilon}) \sum\limits_{i + j + k > r} \frac{(i+j +k)^r}{(i+j+ k)!} \epsilon^i \tilde{\epsilon}^{j} \bar{\epsilon}^k \|t^{i+j+k-r} \pa_t^i \pa_2^{j} \pa_1^k \rho\|_{L_{x,t}^2}.
\end{aligned}
\ee 
Since $\eps, \bar\eps, \tilde\eps$ satisfy the standing assumption, we particularly have $\fr{\tilde{\epsilon}^2}{\bar{\epsilon}^2} \le \frac{1}{2}$. We hence deduce that
\be 
\mathcal{I}_1 \le C(1 + T\tilde{\epsilon}) \sum\limits_{i + j + k > r} \frac{(i+j +k)^r}{(i+j+ k)!} \epsilon^i \tilde{\epsilon}^{j} \bar{\epsilon}^k \|t^{i+j+k-r} \pa_t^i \pa_2^{j} \pa_1^k \rho\|_{L_{x,t}^2}.
\ee As for $\mathcal{I}_2$, we decompose it into 
\be 
\mathcal{I}_{2} = \mathcal{I}_{2,1} + \mathcal{I}_{2,2} + \mathcal{I}_{2,3},
\ee where 
\be 
\mathcal{I}_{2,1} = \sum\limits_{i + j + k > r, j \ge 2} \frac{(i+j+k)^r}{(i+j+k)!} \epsilon^i \tilde{\epsilon}^j \bar{\epsilon}^k \|t^{i+j+k-r} \pa_t^i \pa_2^{j} \pa_1^{k+1} c\|_{L_{x,t}^2},
\ee
\be 
\mathcal{I}_{2,2} = \sum\limits_{i + 1 + k > r} \frac{(i+1+k)^r}{(i+1+k)!} \epsilon^i \tilde{\epsilon} \bar{\epsilon}^k \|t^{i+1+k-r} \pa_t^i \pa_2 \pa_1^{k+1} c\|_{L_{x,t}^2},
\ee and
\be 
\mathcal{I}_{2,3} = \sum\limits_{i + k > r} \frac{(i+k)^r}{(i+k)!} \epsilon^i  \bar{\epsilon}^k \|t^{i+k-r} \pa_t^i \pa_1^{k+1} c\|_{L_{x,t}^2}.
\ee Using again the relation \eqref{pderel}, we estimate $\mathcal{I}_{2,1}$ as $\mathcal{I}_{1,1}$ and obtain
\be
\beg{aligned}
\mathcal{I}_{2,1} 
&\le \fr{\tilde{\epsilon}^2}{\bar{\epsilon}^2}\sum\limits_{i + j + k > r} \frac{(i+j+k)^r}{(i+j+k)!} \epsilon^i \tilde{\epsilon}^{j} \bar{\epsilon}^{k} \|t^{i+j+k-r} \pa_t^i \pa_2^{j} \pa_1^{k+1} c\|_{L_{x,t}^2} 
\\&\quad\quad+ \fr{ 2^{r-1} T\tilde{\epsilon}^2}{\bar{\epsilon}} \sum\limits_{i + j + k > r} \frac{(i+j +k)^r}{(i+j+ k)!} \epsilon^i \tilde{\epsilon}^{j} \bar{\epsilon}^k \|t^{i+j+k-r} \pa_t^i \pa_2^{j} \pa_1^k \rho\|_{L_{x,t}^2}.
\end{aligned}
\ee 
To control $\mathcal{I}_{2,2}$, we note that since $t^{i+1+k-r} \pa_t^i \pa_1^{k+1} \pa_2 c$ solves the Dirichlet problem
\be 
-\Delta (t^{i+1+k-r} \pa_t^i \pa_1^{k+1} \pa_2 c) = t^{i+1+k-r} \pa_t^i \pa_1^{k+1} \pa_2 \rho,
\ee 
\be 
t^{i+1+k-r} \pa_t^i \pa_1^{k+1} \pa_2 c|_{\pa \Omega} = 0,
\ee we have the $H^1$ elliptic estimate
\be 
\|t^{i+1+k-r} \pa_t^i \pa_1^{k+1} \pa_2 c\|_{L^2(\Omega)} \le \|t^{i+1+k-r} \pa_t^i \pa_1^{k+1} \pa_2 c\|_{H^1(\Omega)} \le C\|t^{i+1+k-r} \pa_t^i \pa_1^{k} \pa_2 \rho \|_{L^2(\Omega)}.
\ee As a consequence, we deduce the boundedness of $\mathcal{I}_{2,2}$ by the Gevrey norm of $\rho$ and obtain
\be 
\mathcal{I}_{2,2} 
\le C\sum\limits_{i+1+k > r} \frac{(i+1+k)^r}{(i+1+k)!} \epsilon^i \bar{\epsilon}^k \| t^{i+1+k-r} \pa_t^i \pa_2  \pa_1^{k} \rho\|_{L_{x,t}^2}.
\ee
To control $\mathcal{I}_{2,3}$, we note that $t^{i+k-r} \pa_t^i \pa_1^{k+1}c$ solves the Neumann boundary value problem
\be 
-\Delta (t^{i+k-r} \pa_t^i \pa_1^{k+1}c) = t^{i+k-r} \pa_t^i \pa_1^{k+1}\rho,
\ee 
\be 
\pa_2 (t^{i+k-r} \pa_t^i \pa_1^{k+1}c)|_{\pa \Omega} = 0
\ee with $t^{i+k-r} \pa_t^i \pa_1^{k+1}\rho$ obeying
\be 
\int_{\Omega} t^{i+k-r} \pa_t^i \pa_1^{k+1}\rho dx = 0.
\ee By the elliptic regularity of this problem, we infer that 
\be 
\|t^{i+k-r} \pa_t^i \pa_1^{k+1}c\|_{H^1(\Omega)} \le C\|t^{i+k-r} \pa_t^i \pa_1^{k}\rho\|_{L^2(\Omega)}.
\ee
Thus,
\be 
\mathcal{I}_{2,3} \le C\sum\limits_{i+k > r} \frac{(i+k)^r}{(i+k)!} \epsilon^i \bar{\epsilon}^k \|t^{i+k - r} \pa_t^i \pa_1^k \rho\|_{L_{x,t}^2}.
\ee As the ratio $\frac{\tilde{\epsilon}^2}{\bar{\epsilon}^2}$ is chosen to be less than $\fr{1}{2}$, we conclude that
\be 
\mathcal{I}_2 \le C\left(1 + \fr{T\tilde{\epsilon}^2}{\bar{\epsilon}} \right)\sum\limits_{i+j+k > r} \frac{(i+j+k)^r}{(i+j+k)!} \epsilon^i \tilde{\epsilon}^j \bar{\epsilon}^k \|t^{i+j+k-r} \pa_t^i \pa_2^j \pa_1^k \rho\|_{L_{x,t}^2}.
\ee 
This completes the proof of Proposition \ref{ellipticgevreycontrol}. 
\end{proof}

\section{Gevrey Boundedness of Solutions to the Keller-Segel-Navier-Stokes System} \la{S5}
In this section, we seek good control of the Gevrey sums $\phi_{1,T}$ and $\phi_{2,T}$. This amounts to prove suitable estimates for terms $M_i$ and $N_i$.


\subsection{Interpolation Inequalities}
We first remark on several Gagliardo-Nirenberg-Sobolev type inequalities that will be repeatedly used for the rest of this section. Let $v \in H^2(\Omega)$, the following inequalities are classical:
    \begin{align*}
        \|v\|_{L^\infty(\Omega)} &\le C \|v\|_{\dot{H}^2(\Omega)}^{1/2}\|v\|_{L^2(\Omega)}^{1/2} + \|v\|_{L^2(\Omega)},\\
        \|v\|_{L^4(\Omega)} &\le C \|v\|_{\dot{H}^1(\Omega)}^{1/2}\|v\|_{L^2(\Omega)}^{1/2} + \|v\|_{L^2(\Omega)}.
    \end{align*}
If $v \in H^1(0,T; H^2(\Omega))$, we also have the following space-time analogs:
\begin{align}
    \|v\|_{L^\infty_{x,t}} &\le C \|\p_t v\|_{L^2_t\dot{H}^2_x}^{1/2}\|v\|_{L^2_t\dot{H}^2_x}^{1/2} + \|v\|_{L^2_t\dot{H}^2_x} + \|\p_t v\|_{L^2_{x,t}} + \|v\|_{L^2_{x,t}},\label{est:Linfty}\\
    \|v\|_{L^\infty_tL^4_x} &\le C \|\p_t v\|_{L^2_t\dot{H}^1_x}^{1/2}\|v\|_{L^2_t\dot{H}^1_x}^{1/2} + \|v\|_{L^2_t\dot{H}^1_x} + \|\p_t v\|_{L^2_{x,t}} + \|v\|_{L^2_{x,t}}. \label{est:L4}
\end{align}
With the help of the inequalities above, one may derive the following estimates of terms that are weighted by time:
\begin{align}
    \|t^{i+j+k}\p_t^i\p_2^j\p_1^k v\|_{L^\infty_{x,t}} &\le C \left(\|t^{i+j+k}\p_t^{i+1}\p_2^j\p_1^k v\|_{L^2_t\dot{H}^2_x}^{1/2} + (i+j+k)^{1/2}\|t^{i+j+k-1}\p_t^{i}\p_2^j\p_1^k v\|_{L^2_t\dot{H}^2_x}^{1/2}\right) \notag\\
    &\times \|t^{i+j+k}\p_t^{i}\p_2^j\p_1^k v\|_{L^2_t\dot{H}^2_x}^{1/2} + \|t^{i+j+k}\p_t^{i+1}\p_2^j\p_1^k v\|_{L^2_{x,t}} \notag\\
    &+ (i+j+k)\|t^{i+j+k-1}\p_t^{i}\p_2^j\p_1^k v\|_{L^2_{x,t}} \label{est:weightedLinfty}\\
    \|t^{i+j+k-1}\p_t^i\p_2^j\p_1^k v\|_{L^\infty_tL^4_x} &\le C \left(\|t^{i+j+k-1}\p_t^{i+1}\p_2^j\p_1^k v\|_{L^2_t\dot{H}^1_x}^{1/2} + (i+j+k)^{1/2}\|t^{i+j+k-2}\p_t^{i}\p_2^j\p_1^k v\|_{L^2_t\dot{H}^1_x}^{1/2}\right) \notag\\
    &\times \|t^{i+j+k-1}\p_t^{i}\p_2^j\p_1^k v\|_{L^2_t\dot{H}^1_x}^{1/2} + \|t^{i+j+k-1}\p_t^{i+1}\p_2^j\p_1^k v\|_{L^2_{x,t}} \notag\\
    &+ (i+j+k)\|t^{i+j+k-2}\p_t^{i}\p_2^j\p_1^k v\|_{L^2_{x,t}}, \label{est:weightedL4}
\end{align}
where both \eqref{est:weightedLinfty} and \eqref{est:weightedL4} hold for $t \le 1$ and $i+j+k \ge 1$. We refer the derivations of the inequalities above to Section 4.1 of \cite{CKV}.

\subsection{Combinatorial Inequalities} The following combinatorial fact will be used repeatedly in the upcoming subsections. If $i, j, m, n$ are nonnegative integers such that $j \le i$ and $n \le m$, then it holds that
\begin{equation} \la{comb1}
{i+m \choose j+n} \ge {i \choose j} {m \choose n}
\end{equation} Indeed, this follows from the Vandermonde's identity 
$${i+m \choose j+n} = \sum\limits_{k=0}^{j+n} {i \choose k} {m \choose j+n-k},$$ where the latter sum is obviously bounded from below by the term corresponding to $k = j$, yielding \eqref{comb1}.
As a consequence, we deduce that 
\begin{equation} \label{est:comb1}
{i \choose \ell} {j \choose n}{k \choose m} \frac{(\ell + m + n)!}{(i+j+k)!} (i+j+k - l - n - m)! \le 1
\end{equation} for any nonnegative integers $i, \ell, j, n, k, m$ obeying $\ell \le i, n \le j, m \le k$. 

\subsection{Estimates of $M_i$}
This subsection is dedicated to the proof of the following proposition, which treats the nonlinear estimates in the Keller-Segel part of the system. 
\begin{prop}\label{prop:M}
    There is a positive constant $C$ only depending on initial data $\rho_0, u_0$ and parameter $r$ such that
    \begin{equation}\label{est:M}
    \sum_{i = 1}^6 M_i \le C(\rho_0, u_0, r)\left(T + T^{1/2}(\phi_{1,T} + \phi_{2,T}) + T^{r-5/2}\phi_{1,T}(\phi_{1,T} + \phi_{2,T})\right).
    \end{equation}
\end{prop}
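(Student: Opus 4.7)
The plan is to establish \eqref{est:M} by bounding each $M_i$ individually. The six terms split naturally into two groups: $M_1, M_2, M_3$ arising from the transport nonlinearity $u \cdot \na \rho$, and $M_4, M_5, M_6$ arising from the chemotactic flux $\na \cdot (\rho \na c)$. The essential extra input for the second group is the elliptic Gevrey control of $\na c$ by $\rho$ furnished by Proposition \ref{ellipticgevreycontrol}. Throughout, the high-level recipe is the same: expand each space-time derivative of a product via the Leibniz rule, place one factor in $L^\infty_{x,t}$ (or $L^\infty_t L^4_x$) using the interpolation inequalities \eqref{est:Linfty}--\eqref{est:weightedL4}, place the other in $L^2_{x,t}$, and rearrange the resulting combinatorial sums with the help of \eqref{est:comb1}. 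Whenever a factor carries few derivatives (say at most $r$), it is absorbed into a constant depending on the initial data via Proposition \ref{prop:Sobolev} and its corollary; whenever it carries many derivatives, it is absorbed into $\phi_{1,T}$ or $\phi_{2,T}$.

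For $M_1$, I would write
\begin{equation*}
\pa_t^i \pa_2^j \pa_1^k (u \cdot \na \rho) = \sum_{\ell \le i,\, m \le j,\, n \le k} \binom{i}{\ell}\binom{j}{m}\binom{k}{n} \, (\pa_t^\ell \pa_2^m \pa_1^n u) \cdot \na(\pa_t^{i-\ell}\pa_2^{j-m}\pa_1^{k-n}\rho),
\end{equation*}
then split the sum into a low-frequency piece (where $\ell + m + n$ is small, so $\na\rho$ is placed in $L^\infty$ via the weighted interpolation and $u$ in $L^2$) and a high-frequency piece (with the roles of $u$ and $\rho$ swapped). In each piece, Hölder in time generates a power $T^\sigma$ coming from the mismatch between the weight $t^{i+j+k+2-r}$ appearing in $M_1$ and the two weights $t^{\ell+m+n-r}$, $t^{(i-\ell)+(j-m)+(k-n)-r}$ appearing in the $\phi$ norms. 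After collecting factorial ratios and invoking \eqref{est:comb1}, the double sum telescopes into a product of two Gevrey norms, $\phi_{1,T}(\phi_{1,T}+\phi_{2,T})$, or a product of one Gevrey norm with a Sobolev constant, yielding contributions of the form $C T^{1/2}(\phi_{1,T}+\phi_{2,T}) + C T^{r-5/2}\phi_{1,T}(\phi_{1,T}+\phi_{2,T}) + CT$. The terms $M_2$ and $M_3$ are treated by the same scheme but are simpler, since only $\pa_2$ or no spatial derivative appears, and the $\epsilon^{i+1}$ weight gives additional smallness.

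For $M_4, M_5, M_6$, I use the identity $\na \cdot (\rho \na c) = \na \rho \cdot \na c - \rho(\rho - \bar\rho)$, where the Poisson equation \eqref{poisson} has been used to replace $\Delta c$. The quadratic piece $\rho(\rho-\bar\rho)$ is estimated exactly as $u\cdot\na\rho$ above, with $\rho$ playing both roles. For the bilinear piece $\na\rho \cdot \na c$, I apply Leibniz, split into low- and high-frequency parts, and estimate each $L^2_{x,t}$ norm of derivatives of $\na c$ by pulling it inside a Gevrey-type sum, at which point Proposition \ref{ellipticgevreycontrol} converts that sum into a constant multiple of the corresponding sum for $\rho$; under the standing assumption \eqref{neumanncond} the prefactor $1 + T\tilde\epsilon + T\tilde\epsilon^2/\bar\epsilon$ remains bounded. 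Combining all six contributions produces the desired estimate \eqref{est:M}.

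The main obstacle is the combinatorial bookkeeping. One must track powers of $\epsilon, \tilde\epsilon, \bar\epsilon$ and the shifts $j \mapsto j+1$, $i \mapsto i+1$ built into the definitions of $M_i$ so that after re-indexing, the Gevrey weights line up with those of $\phi_{1,T}, \phi_{2,T}$ with at most a harmless constant depending on $r$. A second subtlety is the correct accounting of the time weight: one must verify that across the entire Leibniz expansion, Hölder in time always yields at least a power $T^{1/2}$ (for the Sobolev--Gevrey cross terms) and $T^{r-5/2}$ (for the Gevrey--Gevrey cross terms), the latter coming from integrating $t^{\alpha}$ with $\alpha = 2(r-2) - 2(r-1) + (r-r) = \ldots$ over a small interval. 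A final delicate point is ensuring that whenever one factor carries fewer than $r$ derivatives the corresponding $L^\infty$ estimate genuinely reduces to a Sobolev constant rather than to a Gevrey norm, so that the prefactor is $C(\rho_0, u_0, r)$ rather than a fresh copy of $\phi_{1,T}+\phi_{2,T}$.
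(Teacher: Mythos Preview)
Your proposal is essentially correct and shares the paper's core strategy: Leibniz expansion, frequency splitting with $L^\infty_{x,t}$/$L^\infty_t L^4_x$/$L^2$ placements via \eqref{est:Linfty}--\eqref{est:weightedL4}, combinatorial cleanup via \eqref{est:comb1}, and the Gevrey elliptic bound \eqref{est:gevreyellip} for the chemotactic piece. The differences are organizational. First, the paper unifies the two groups by writing $u\cdot\nabla\rho = \nabla\cdot(\rho u)$ and treating all six $M_i$ as instances of $\mathcal{M}_i(v)$ for a generic vector field $v$ (taking $v=u$ or $v=\nabla c$), so the bilinear estimate is done once; your identity $\nabla\cdot(\rho\nabla c) = \nabla\rho\cdot\nabla c - \rho(\rho-\bar\rho)$ is exactly the paper's splitting $\nabla\cdot(\rho v) = v\cdot\nabla\rho + \rho\,\nabla\cdot v$ specialized to $v=\nabla c$. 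Second, rather than estimating $M_2,M_3$ (and $M_5,M_6$) from scratch, the paper observes that after simple index shifts they are dominated by a single sum $\tilde{\mathcal{M}}_1(v)$ up to a harmless $\epsilon\tilde\epsilon^{-2}$ factor, which cuts the workload. Two small points to correct: in your low-frequency piece you swapped the roles (with $\ell+m+n$ small it is $u$ that carries few derivatives and should go to $L^\infty$, while $\nabla\rho$ goes to $L^2$); and a binary low/high split is not quite enough---the paper uses a finer partition of the Leibniz range ($|\beta|=0$, $|\beta|=1$, $2\le|\beta|\le\lfloor|\alpha|/2\rfloor$, $\lfloor|\alpha|/2\rfloor<|\beta|\le|\alpha|-1$, $|\beta|=|\alpha|$) with different H\"older pairings in each, which is what makes the combinatorics and the claimed $T^{r-5/2}$ power close cleanly.
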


To start, we observe several simplifications. We may first rewrite \eqref{eq:ksns} as follows:
$$
\p_t \rho - \Delta \rho + \nabla\cdot(\rho (u + \nabla c)) = 0.
$$
The divergence structure of the nonlinear terms above motivates us to consider the following more general terms:
\begin{align*}
    \calM_1(v) &= \sum\limits_{i + j + k \ge r-2} \fr{(i+j+k+1)^{r-1} \epsilon^i \tilde{\epsilon}^{j+1}\bar{\epsilon}^{k}}{(i+j+k+1)!} \|t^{i+j+k + 2-r} \pa_t^i \pa_2^{j} \pa_1^k (\nabla\cdot(\rho v))  \|_{L^2((0,T) \times \Omega)},\\
    \calM_2(v) &= \sum\limits_{i \ge r-1} \fr{(i+1)^{r-1} \epsilon^{i+1}}{(i+1)!} \|t^{i+2-r}\pa_t^i \pa_2 (\nabla\cdot(\rho v))\|_{L^2((0,T) \times \Omega)},\\
    \calM_3(v) &= \sum\limits_{i \ge r} \fr{(i+1)^r \epsilon^{i+1} }{(i+1)!} \|t^{i+1-r} \pa_t^i (\nabla\cdot(\rho v))\|_{L^2((0,T) \times \Omega)},
\end{align*}
where $v = (v_1, v_2)$ denotes an analytic vector field. We set the Sobolev norm of $v$
$$
\phivv := \sum_{i+j+k \le r}\|\p_t^i\p_2^j\p_1^k v\|_{L^2_{x,t}},
$$
and the high-frequency analytic norm
$$
\phiv := \sum\limits_{i + j + k > r} \frac{(i+j+k)^r}{(i+j+k)!} \epsilon^i \tilde{\epsilon}^j \bar{\epsilon}^k \|t^{i+j+k-r} \pa_t^i \pa_2^j \pa_1^k v\|_{L^2_{x,t}}.
$$
It is worth noticing that we can further reduce the control of $\calM_i$, $i = 1,2,3$.
\begin{prop}
    There exists a positive constant $C(r)$ depending only on $r$ such that
    \begin{equation} \la{simply3}
\mathcal{M}_1(v) + \mathcal{M}_2(v) + \mathcal{M}_3(v)
\le C(r)\epsilon^{r-1} \phi_{0,T}\phi_{0,T}^{v}
+ C(r)\left(T + \epsilon \tilde{\epsilon}^{-2} + \epsilon \tilde{\epsilon}^{-1} \right) \tilde{\mathcal{M}}_1(v)
    \end{equation} holds, where
    \be 
\tilde{\mathcal{M}}_1(v) = \sum\limits_{i+j+k \ge r-1} \frac{(i+j+k+1)^{r}\epsilon^i \tilde{\epsilon}^{j+1}\bar{\epsilon}^k}{(i+j+k+1)!} \|t^{i+j+k+1-r} \pa_t^i \pa_2^j \pa_1^k \na \cdot (\rho v)\|_{L^2((0,T) \times \Omega)}.
    \ee 
\end{prop}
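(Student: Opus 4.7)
The strategy is to handle $\mathcal{M}_1(v)$ by a two-piece decomposition (low-frequency block absorbed into the Sobolev product $\phi_{0,T}\phi_{0,T}^{v}$, high-frequency tail absorbed into $\tilde{\mathcal{M}}_1(v)$), and to handle $\mathcal{M}_2(v)$ and $\mathcal{M}_3(v)$ by a direct coefficient comparison against $\tilde{\mathcal{M}}_1(v)$. The key observation is that all four quantities share the same nonlinearity $\nabla\cdot(\rho v)$ acted on by $\pa_t^i\pa_2^j\pa_1^k$; only the Gevrey prefactor, the power of $(i{+}j{+}k{+}1)$, and the time weight differ, so the reduction is essentially algebraic once the finite-dimensional piece is split off.

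For $\mathcal{M}_1(v)$, I would split the outer sum according to whether $i+j+k = r-2$ or $i+j+k \ge r-1$. On the low block the time weight $t^{i+j+k+2-r}$ collapses to $1$ and the sum is finite. Expanding $\pa_t^i\pa_2^j\pa_1^k\nabla\cdot(\rho v)$ via Leibniz yields a finite linear combination of products $\pa^{\alpha}\rho \cdot \pa^{\beta}v$ with $|\alpha|+|\beta| \le r$; placing the lower-order factor in $L^\infty_{x,t}$ via the Gagliardo--Nirenberg inequality \eqref{est:Linfty} (which costs at most two extra derivatives and hence stays within the Sobolev range $\le r$ since $r \ge 3$) and the higher-order factor in $L^2_{x,t}$ produces a bound of the form $C(r)\phi_{0,T}\phi_{0,T}^{v}$. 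Since $\tilde\epsilon,\bar\epsilon \le \epsilon$ by the standing assumption, the prefactor $\epsilon^i\tilde\epsilon^{j+1}\bar\epsilon^{k}$ is at most $\epsilon^{r-1}$, yielding the $C(r)\epsilon^{r-1}\phi_{0,T}\phi_{0,T}^{v}$ contribution. On the high block $i+j+k \ge r-1$, I would factor $t^{i+j+k+2-r} = t\cdot t^{i+j+k+1-r} \le T\, t^{i+j+k+1-r}$ and use $(i+j+k+1)^{r-1} \le (i+j+k+1)^r$, so this block is bounded by $T\tilde{\mathcal{M}}_1(v)$.

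For $\mathcal{M}_2(v)$ and $\mathcal{M}_3(v)$, I would match each summand to the appropriate summand of $\tilde{\mathcal{M}}_1(v)$. A generic summand of $\mathcal{M}_2(v)$ (with exponents $i$, one $\pa_2$, no $\pa_1$, and weight $t^{i+2-r}$) corresponds to the $j=1, k=0$ summand in $\tilde{\mathcal{M}}_1(v)$, which has the same weight $t^{i+2-r}$ and satisfies $i+j+k = i+1 \ge r-1$ whenever $i \ge r-1$; the coefficient ratio is
\[
\frac{(i+1)^{r-1}\epsilon^{i+1}/(i+1)!}{(i+2)^{r}\epsilon^{i}\tilde\epsilon^{2}/(i+2)!} \;=\; \frac{\epsilon}{\tilde\epsilon^{2}}\cdot\frac{(i+1)^{r-1}}{(i+2)^{r-1}} \;\le\; \frac{\epsilon}{\tilde\epsilon^{2}},
\]
yielding $\mathcal{M}_2(v) \le (\epsilon/\tilde\epsilon^{2})\tilde{\mathcal{M}}_1(v)$. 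Similarly, a summand of $\mathcal{M}_3(v)$ (exponents $i$, no $\pa_2$, no $\pa_1$, weight $t^{i+1-r}$, constrained by $i \ge r$) matches the $j=k=0$ summand of $\tilde{\mathcal{M}}_1(v)$ exactly in weight and in the $(i+1)^r/(i+1)!$ factor; only $\epsilon^{i+1}$ versus $\epsilon^i\tilde\epsilon$ differs, so $\mathcal{M}_3(v) \le (\epsilon/\tilde\epsilon)\tilde{\mathcal{M}}_1(v)$. Summing the three contributions delivers \eqref{simply3}.

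I do not expect a genuine analytical obstruction here: no regularity argument beyond standard Sobolev embedding and Leibniz are needed, and the combinatorial factors align cleanly. The main thing to watch is bookkeeping: one must verify that the shift $j \mapsto j+1$ in the definition of $\tilde{\mathcal{M}}_1(v)$ correctly pairs $\mathcal{M}_1$ with the $j$-summand and $\mathcal{M}_2$ with the $j=1$ summand, that the constraint $i+j+k \ge r-1$ is compatible with the ranges $i \ge r-1$ and $i \ge r$ in $\mathcal{M}_2$ and $\mathcal{M}_3$ respectively, and that the $L^\infty_{x,t}$--$L^2_{x,t}$ split in the low-index piece of $\mathcal{M}_1$ respects the available Sobolev exponent $r$, which uses crucially $r \ge 3$.
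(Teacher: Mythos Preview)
Your proposal is correct and follows essentially the same route as the paper: split $\mathcal{M}_1(v)$ at the level $i+j+k=r-2$, absorb the finite low block into $C(r)\epsilon^{r-1}\phi_{0,T}\phi_{0,T}^{v}$ via a product estimate, bound the tail by $T\tilde{\mathcal{M}}_1(v)$ via $t^{i+j+k+2-r}\le T\,t^{i+j+k+1-r}$, and match $\mathcal{M}_2(v)$, $\mathcal{M}_3(v)$ to the $(j,k)=(1,0)$ and $(j,k)=(0,0)$ summands of $\tilde{\mathcal{M}}_1(v)$ by the exact coefficient comparison you wrote. The only cosmetic difference is that for the low block the paper invokes the Banach algebra property of $H^{r-1}((0,T)\times\Omega)$ (valid since $r\ge 3$) rather than a Leibniz/$L^\infty$--$L^2$ split, which is slightly cleaner but equivalent in effect.
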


\begin{proof}
For the sake of clarity, we suppress the $v$-dependence of the terms $\calM_i$ and $\tilde{\calM}_1$ below. Note that we may decompose $\mathcal{M}_1$ into the sum $\mathcal{M}_{11} + \mathcal{M}_{12}$ where
\be 
\mathcal{M}_{11} =\sum\limits_{i+j+k = r-2} \frac{(i+j+k+1)^{r-1}\epsilon^i \tilde{\epsilon}^{j+1}\bar{\epsilon}^k}{(i+j+k+1)!} \| \pa_t^i \pa_2^j \pa_1^k \na \cdot (\rho v)\|_{L^2((0,T) \times \Omega)}
\ee and 
\be 
\mathcal{M}_{12} =\sum\limits_{i+j+k \ge r-1} \frac{(i+j+k+1)^{r-1}\epsilon^i \tilde{\epsilon}^{j+1}\bar{\epsilon}^k}{(i+j+k+1)!} \|t^{i+j+k+2-r} \pa_t^i \pa_2^j \pa_1^k \na \cdot (\rho v)\|_{L^2((0,T) \times \Omega)}.
\ee Since $r \ge 3$, $H^{r-1}((0,T) \times \Omega)$ is a Banach Algebra, and consequently we can estimate $\mathcal{M}_{11}$ as follows,
\begin{equation} \label{simply1}
\beg{aligned}
\mathcal{M}_{11}
&\le C(r) \epsilon^{r-1} \|\rho v\|_{H^{r-1}((0,T) \times \Omega)}\\
&\le C(r) \epsilon^{r-1} \|\rho\|_{H^{r-1}((0,T) \times \Omega)} \|v\|_{H^{r-1}((0,T) \times \Omega)}\\
&\le C(r) \epsilon^{r-1} \phi_{0,T} \phi_{0,T}^v.
\end{aligned}
\end{equation} As for $\mathcal{M}_{12}$, we have the straightforward bound
\be 
\mathcal{M}_{12}
\le T \tilde{\mathcal{M}}_{1}.
\ee In view of the algebraic inequality 
\be 
\frac{(i+1)^{r-1}}{(i+1)!} \le \frac{(i+2)^{r}}{(i+2)!},
\ee we estimate 
\be 
\mathcal{M}_2 \le \frac{2\epsilon}{\tilde{\epsilon}^2} \sum\limits_{i+1 \ge r} \frac{(i+1+1)^{r}}{(i+1+1)!}\epsilon^i \tilde{\epsilon}^2 \|t^{i+1+1-r} \pa_t \pa_2 \na \cdot (\rho v)\|_{L^2((0,T)\times \Omega)}
\le \frac{2\epsilon}{\tilde{\epsilon}^2} \tilde{\mathcal{M}}_1.
\ee We finally bound $\mathcal{M}_3$ by 
\begin{equation} \label{simply2}
\mathcal{M}_3 \le \frac{\epsilon}{\tilde{\epsilon}} \tilde{\mathcal{M}}_1.
\end{equation} Putting \eqref{simply1}--\eqref{simply2} together, we deduce \eqref{simply3}.
 \end{proof}

Now, we demonstrate the following estimate of $\tilde{\calM}_1(v)$.
\beg{prop} Suppose $\eps,\bar\eps, \tilde\eps$ satisfy the standing assumption and $r \ge 3$. Then we have
\begin{equation} \label{tM1}
\tilde{\calM}_{1}(v) \le C(r)\left(\phi_{0,T}^v \phi_{0,T} + \phi_{1,T}^v \phi_{0,T} + \phi_{0,T}^v \phi_{1,T} + T^{r-5/2}\phi_{1,T}^v \phi_{1,T}\right).
\end{equation}
\end{prop}

\begin{proof}
   For convenience, we suppress the dependence of $\tilde{\calM}_1$ on $v$. We also introduce the following multi-index notation:
   \be 
\pa^{\alpha} = \pa_t^i \pa_2^j \pa_1^k, \hspace{0.5cm} |\alpha| = i + j + k, \hspace{0.5cm} {\alpha \choose \beta} = \frac{\alpha!}{\beta! (\alpha - \beta)!}, \hspace{0.5cm}  \varepsilon^\alpha = \eps^i\tilde{\eps}^j\Bar{\eps}^k
   \ee Then we may rewrite $\tilde{\mathcal M}_1$ as
   \be 
\tilde{\mathcal M}_1 = \sum\limits_{|\alpha| \ge r -1} \frac{(|\alpha|+1)^r\tilde{\eps}\varepsilon^\alpha}{(|\alpha|+1)!} \|t^{|\alpha| + 1 -r} \pa^{\alpha} \na \cdot (\rho v)\|_{L_{x,t}^2}.
   \ee We may further decompose $\calM_1$ into $\tilde{\mathcal M}_{11}$ and $\tilde{\mathcal M}_{12}$, where
   \be 
\tilde{\mathcal M}_{11} = \sum\limits_{|\alpha| \ge r-1} \frac{(|\alpha|+1)^r \tilde{\eps}\varepsilon^\alpha}{(|\alpha| +1)!} \|t^{|\alpha| + 1 -r} \pa^{\alpha} (v \cdot \na \rho)\|_{L_{x,t}^2}
   \ee and 
   \be 
\tilde{\mathcal M}_{12} = \sum\limits_{|\alpha| \ge r-1} \frac{(|\alpha|+1)^r \tilde{\eps}\varepsilon^\alpha}{(|\alpha| +1)!} \|t^{|\alpha| + 1 -r} \pa^{\alpha} (\rho \na \cdot v)\|_{L_{x,t}^2}.
   \ee We start by estimating $\tilde{\mathcal{M}}_{11}$. We have
   \be 
\tilde{\mathcal{M}}_{11} = \tilde{\mathcal{M}}_{111} 
+ \tilde{\mathcal{M}}_{112}
+ \tilde{\mathcal{M}}_{113},
   \ee where
   \be 
\tilde{\mathcal{M}}_{111} = \frac{r^r}{r!} \sum\limits_{|\alpha| = r-1} \tilde{\eps}\varepsilon^\alpha \|\pa^{\alpha} (v \cdot \na \rho)\|_{L_{x,t}^2},
   \ee 
   \be 
\tilde{\mathcal{M}}_{112} = \frac{(r+1)^r}{(r+1)!} \sum\limits_{|\alpha| = r}\tilde{\eps}\varepsilon^\alpha \|t \pa^{\alpha} (v \cdot \na \rho)\|_{L_{x,t}^2},
   \ee and 
   \be 
\tilde{\mathcal{M}}_{113} = \sum\limits_{|\alpha| > r} \frac{(|\alpha|+1)^r}{(|\alpha| +1)!} \tilde{\eps}\varepsilon^\alpha \|t^{|\alpha| + 1 -r} \pa^{\alpha} (v \cdot \na \rho)\|_{L_{x,t}^2}.
   \ee 
   
   \begin{enumerate}
       \item \textbf{Estimate of $\tilde{\calM}_{111}$.} Since $r \ge 3$, $H^{r-1}((0,T) \times \Omega)$ is a Banach Algebra, and thus
   \begin{align} \label{M111}
\tilde{\mathcal{M}}_{111}
&\le C(r)\tilde{\eps}\varepsilon^\alpha \|v \cdot \na \rho\|_{H^{r-1}((0,T) \times \Omega)}\notag\\ 
&\le C(r) \tilde{\eps}\varepsilon^\alpha \|v\|_{H^{r-1}((0,T) \times \Omega)} \|\rho\|_{H^r((0,T) \times \Omega)}\notag\\ 
&\le C(r) \tilde{\eps}\varepsilon^\alpha \phi_{0,T}^v \phi_{0,T}\notag\\
&\le C(r) \eps^r \varepsilon^\alpha \phi_{0,T}^v \phi_{0,T}.
   \end{align} 

    \item \textbf{Estimate of $\tilde{\calM}_{112}$.} By the product rule, we first estimate that
\begin{align*}
    \|t\p^\alpha(v\cdot \nabla \rho)\|_{L^2_{x,t}} &\le \sum_{\beta \le \alpha}{\alpha \choose \beta}\|t\p^\beta v \cdot \nabla \p^{\alpha- \beta}\rho\|_{L^2_{x,t}}\\
    &\le C(r)\Big(\|tv\cdot\nabla \p^\alpha \rho\|_{L^2_{x,t}} + \|t\p^\alpha v \cdot \nabla \rho\|_{L^2_{x,t}} + \sum\limits_{|\beta| = 1}\|t\p^\beta v \cdot \nabla \p^{\alpha - \beta}\rho\|_{L^2_{x,t}}\\
    &\quad\quad+ \sum\limits_{|\beta| = 1}\|t\p^{\alpha - \beta}v \cdot \nabla\p^\beta \rho\|_{L^2_{x,t}} + \sum_{2\le |\beta| \le |\alpha| - 2}\|t\p^\beta v \cdot \nabla \p^{\alpha-\beta}\rho\|_{L^2_{x,t}}\Big)\\
    &\le C(r)\big(\|v\|_{L^\infty_{x,t}}\|t\nabla\p^\alpha\rho\|_{L^2_{x,t}} + \|\p^\alpha v\|_{L^2_{x,t}}\|t\nabla \rho\|_{L^\infty_{x,t}}
        + \sum_{|\beta| = 1}\|\p^\beta v\|_{L^\infty_tL^4_x}\|t\nabla \p^{\alpha - \beta} \rho\|_{L^2_tL^4_x} 
    \\&\quad\quad+ \sum_{|\beta| = |\alpha| - 1}\|\p^{\beta}v\|_{L^2_tL^4_x}\|t\nabla\p^{\alpha -\beta} \rho\|_{L^\infty_tL^4_x}
    + \sum_{2\le |\beta| \le |\alpha| - 2}\|t\p^\beta v\|_{L^\infty_tL^4_x}\|\nabla\p^{\alpha - \beta}\rho\|_{L^2_tL^4_x}\big)\\
    &=: C(r)(\tilde{\calM}_{1121}^\alpha + \tilde{\calM}_{1122}^\alpha + \tilde{\calM}_{1123}^\alpha + \tilde{\calM}_{1124}^\alpha + \tilde{\calM}_{1125}^\alpha)
\end{align*}
    Now, we estimate the terms above separately. First, using \eqref{est:Linfty} and definitions of the Gevrey norms, we have
    \begin{align}
        \sum_{|\alpha| = r}\tilde{\calM}_{1121}^\alpha &\le C\left(\|\p_t v\|_{L^2_t\dot{H}^2_x}^{1/2}\|v\|_{L^2_t\dot{H}^2_x}^{1/2} + \|v\|_{L^2_t\dot{H}^2_x} + \|\p_t v\|_{L^2_{x,t}} + \|v\|_{L^2_{x,t}}\right)\left(\sum_{|\alpha| = r}\sum_{l=1}^2\|t\p_l\p^\alpha\rho\|_{L^2_{x,t}}\right) \notag\\
        &\le C\tilde{\eps}^{-r-1}\phivv\phi_{1,T},
        \label{M1121}
        \end{align}
        Similarly, we can also obtain
        \begin{align}
        \sum_{|\alpha| = r}\tilde{\calM}_{1122}^\alpha &\le \sum_{|\alpha| = r}\|\p^\alpha v\|_{L^2_{x,t}}\left(\|t\p_1 \rho\|_{L^\infty_{x,t}} + \|t\p_2 \rho\|_{L^\infty_{x,t}}\right)\notag
        \\&\le C \phivv\sum_{l = 1}^2 \Big[\left(\|t\p_t\p_l\rho\|_{L^2_t\dot{H}^2_x}^{1/2} + \|\p_l\rho\|_{L^2_t\dot{H}^2_x}^{1/2}\right)\|t\p_l \rho\|_{L^2_t\dot{H}^2_x}^{1/2} + \|t\p_t\p_l\rho\|_{L^2_{x,t}} + \|\p_l\rho\|_{L^2_{x,t}}\Big]\notag
        \\&\le C \phivv\left(\left(\tilde\epsilon^{-2}\phi_{1,T}^{1/2} + \phi_{0,T}^{1/2}\right)T^{1/2}\phi_{0,T}^{1/2} + T\phi_{0,T} + \phi_{0,T}\right)\notag
        \\&\le C \phivv\left(T^{1/2}\tilde\epsilon^{-2}\phi_{1,T}^{1/2}\phi_{0,T}^{1/2} + \phi_{0,T}\right), \label{M1122}
        \end{align}
        Using \eqref{est:L4} and the classical Gagliardo-Nirenberg-Sobolev inequality, we have
        \begin{align}
        \sum_{|\alpha| = r}\tilde{\calM}_{1123}^\alpha
        &\le C \sum_{|\alpha| = r}\sum_{|\beta| = 1}\left(\|\p_t \p^\beta v\|_{L^2_t\dot{H}^1_x}^{1/2}\|\p^\beta v\|_{L^2_t\dot{H}^1_x}^{1/2} + \|\p^\beta v\|_{L^2_t\dot{H}^1_x} + \|\p_t\p^\beta v\|_{L^2_{x,t}} + \|\p^\beta v\|_{L^2_{x,t}}\right)\notag
        \\&\hspace{5cm}\times \left(\|t\nabla \p^{\alpha - \beta} \rho\|_{L^2_{x,t}}^{1/2}\|t\nabla \p^{\alpha - \beta} \rho\|_{L^2\dot{H}^1_x}^{1/2} + \|t\nabla \p^{\alpha - \beta} \rho\|_{L^2_{x,t}}\right)\notag
        \\&\le C\phivv\left(T^{1/2}\tilde\epsilon^{-\frac{r+1}{2}}\phi_{0,T}^{1/2}\phi_{1,T}^{1/2} + T\phi_{0,T}\right), \label{M1123}
        \end{align}
        Similarly, we obtain
     \begin{align}       
        \sum_{|\alpha| = r}\tilde{\calM}_{1124}^\alpha 
        &\le C \sum_{|\alpha| = r}\sum_{|\beta| = |\alpha| - 1}\left(\|\p^\beta v\|_{L^2_t\dot{H}^1_x}^{1/2}\|\p^\beta v\|_{L^2_{x,t}}^{1/2} + \|\p^\beta v\|_{L^2_{x,t}}\right)\bigg[\left(\|t\p_t\p^{\alpha - \beta}\nabla \rho\|_{L^2_t\dot{H}^1_x}^{1/2} + \|\p^{\alpha - \beta} \nabla \rho\|_{L^2_t\dot{H}^1_x}^{1/2}\right)\notag
        \\&\hspace{5cm}\times \|t\p^{\alpha - \beta}\nabla \rho\|_{L^2_t\dot{H}^1_x}^{1/2} + \|t\p_t\p^{\alpha - \beta}\nabla \rho\|_{L^2_{x,t}} + \|\p^{\alpha - \beta}\nabla \rho\|_{L^2_{x,t}}\bigg]\notag
        \\&\le C \phivv\left((\tilde\epsilon^{-2}\phi_{1,T}^{1/2} + \phi_{0,T}^{1/2})T^{1/2}\phi_{0,T}^{1/2} + T\phi_{0,T} + \phi_{0,T}\right),\label{M1124}
        \end{align}
        Finally, using a similar argument to the estimates of the two terms above, we have
        \begin{align}
        \sum_{|\alpha| = r}\tilde{\calM}_{1125}^\alpha  
        &\le C\sum_{|\alpha| = r}\sum_{|\beta| = 2}^{|\alpha|-2} T\left(\|\p_t\p^\beta v\|_{L^2_t\dot{H}^1_x}^{1/2}\|\p^\beta v\|_{L^2_t\dot{H}^1_x}^{1/2} + \|\p^\beta v\|_{L^2_t\dot{H}^1_x} + \|\p_t\p^\beta v\|_{L^2_{x,t}} + \|\p^\beta v\|_{L^2_{x,t}}\right)\notag
        \\&\hspace{5cm}\times\left(\|\nabla \p^{\alpha - \beta}\rho\|_{L^2_t\dot{H}^1_x}^{1/2}\|\nabla \p^{\alpha - \beta}\rho\|_{L^2_{x,t}}^{1/2} + \|\nabla \partial^{\alpha - \beta} \rho\|_{L_{x,t}^2}\right)\notag
        \\&\le CT\phi_{0,T}\phivv. \label{M1125}
        \end{align}
        Combining estimates \eqref{M1121}--\eqref{M1125} and using $T < 1$, we conclude that
        \begin{equation}
            \label{M112}
            \tilde{\calM}_{112} \le C\left(\frac{\eps}{\tilde{\eps}}\right)^r(\phivv\phi_{0,T} + \phivv\phi_{1,T} + \phivv\phi_{0,T}^{1/2}\phi_{1,T}^{1/2})
        \end{equation}

    \item \textbf{Estimate of $\tilde{\calM}_{113}$}
    Now we turn to the estimate of the term $\tilde{\calM}_{113}$. We first split the terms in the following fashion:
    \begin{align*}
        \tilde{\calM}_{113} &\le C\sum_{|\alpha| > r}\n{|\alpha|+1}{r}\tilde{\eps}\varepsilon^\alpha\left(\|v\|_{L^\infty_{x,t}}\|t^{|\alpha|+1 - r}\p^\alpha \nabla \rho\|_{L^2_{x,t}} + \sum_{|\beta| = 1}\|\p^\beta v\|_{L^\infty_tL^4_x}\|t^{|\alpha| +1 - r}\p^{\alpha - \beta}\nabla \rho\|_{L^2_tL^4_x}\right)\\ 
        &\quad+ C\sum_{|\alpha| > r}\n{|\alpha|+1}{r}\tilde{\eps}\varepsilon^\alpha\|\nabla \rho\|_{L^\infty_t L^4_x}\|t^{|\alpha|+1 - r}\p^\alpha v\|_{L^2_tL^4_x}
        \\&\quad\quad+ C\sum_{|\alpha| > r}\sum_{|\beta| = 2}^{\lfloor\frac{|\alpha|}{2}\rfloor}\n{|\alpha|+1}{r}\tilde{\eps}\varepsilon^\alpha{\alpha \choose \beta}\|t^{|\beta|}\p^\beta v\|_{L^\infty_{x,t}}\|t^{|\alpha|+1-|\beta|-r}\p^{\alpha - \beta}\nabla \rho\|_{L^2_{x,t}}
        \\&\quad\quad\quad\quad+ C\sum_{|\alpha| > r}\sum_{|\beta| = \lfloor\frac{|\alpha|}{2}\rfloor+1}^{|\alpha|-1}\n{|\alpha|+1}{r}\tilde{\eps}\varepsilon^\alpha{\alpha \choose \beta}\|t^{|\beta|+1-r}\p^{\beta} v\|_{L^2_tL^4_x}\|t^{|\alpha|-|\beta|}\p^{\alpha - \beta}\nabla \rho\|_{L^\infty_tL^4_x}\\
        &= \tilde{\calM}_{1131} + \tilde{\calM}_{1132} + \tilde{\calM}_{1133} + \tilde{\calM}_{1134}.
    \end{align*}
    To estimate $\tilde{\calM}_{1131}$, we use \eqref{est:Linfty} and \eqref{est:L4} to obtain that
    \begin{align}
        \tilde{\calM}_{1131} &\le C\left((\|\p_t v\|_{L^2_t\dot{H}^2_x}^{1/2}\|v\|_{L^2_t\dot{H}^2_x}^{1/2}) + \|v\|_{L^2_t\dot{H}^2_x} + \|\p_t v\|_{L^2_{x,t}} + \|v\|_{L^2_{x,t}}\right)\phi_{1,T}\notag\\
        &+C\sum_{|\beta| = 1}\left((\|\p_t \p^\beta v\|_{L^2_t\dot{H}^1_x}^{1/2}\|\p^\beta v\|_{L^2_t\dot{H}^1_x}^{1/2}) + \|\p^\beta v\|_{L^2_t\dot{H}^1_x} + \|\p_t \p^\beta v\|_{L^2_{x,t}} + \| \p^\beta v\|_{L^2_{x,t}}\right)\phi_{1,T}\notag\\
        &\le C\phivv\phi_{1,T}. \label{M1131}
    \end{align}
    Using \eqref{est:L4}, we first note that as $r \ge 3$,
    \begin{align*}
    \|\nabla \rho\|_{L^\infty_t L^4_x} &\le C(\|\p_t\nabla\rho\|_{L^2_t\dot{H}^1_x}^{1/2}\|\nabla\rho\|_{L^2_t\dot{H}^1_x}^{1/2} + \|\nabla\rho\|_{L^2_t\dot{H}^1_x} + \|\p_t\nabla\rho\|_{L^2_{x,t}} + \|\nabla\rho\|_{L^2_{x,t}}) \le C\phi_{0,T}.
    \end{align*}
    Thus we may estimate $\tilde{\calM}_{1132}$ by:
    \begin{align}
        \tilde{\calM}_{1132} &\le C\phi_{0,T}\sum_{|\alpha| > r}\n{|\alpha|+1}{r}\tilde{\eps}\varepsilon^\alpha\|t^{|\alpha|+1 - r}\p^\alpha v\|_{L^2_tL^4_x}\notag\\
        &\le C\phi_{0,T}\sum_{|\alpha| > r}\n{|\alpha|+1}{r}\tilde{\eps}\varepsilon^\alpha(\|t^{|\alpha| +1 - r}\p^\alpha v\|_{L^2_t\dot{H}^1_x}^{1/2}\|t^{|\alpha|+1 - r}\p^\alpha v\|_{L^2_{x,t}}^{1/2} + \|t^{|\alpha|+1 - r}\p^\alpha v\|_{L^2_{x,t}})\notag\\
        &\le CT^{1/2}\tilde{\epsilon}^{1/2}\phi_{0,T}\sum_{|\alpha| > r}\n{|\alpha|+1}{r}\left(\frac{(|\alpha|+1)!|\alpha|!}{(|\alpha|+1)^r|\alpha|^r}\right)^{1/2}\left(\tilde{\eps}\varepsilon^\alpha\n{|\alpha|+1}{r}\|t^{|\alpha|+1 - r}\p^\alpha v\|_{L^2_t\dot{H}^1_x}\right)^{1/2}\notag\\
        &\quad\times \left(\varepsilon^\alpha\n{|\alpha|}{r}\|t^{|\alpha| - r}\p^\alpha v\|_{L^2_{x,t}}\right)^{1/2} + CT\tilde{\eps} \phi_{0,T}\sum_{|\alpha| > r}\n{|\alpha|+1}{r}\varepsilon^\alpha\|t^{|\alpha| - r}\p^\alpha v\|_{L^2_{x,t}}\notag\\
        &\le C(r)(T^{1/2} + T)\phi_{0,T}\phiv \le C(r)T^{1/2}\phi_{0,T}\phiv, \label{M1132}
    \end{align}
    where we used $T < 1$, and the fact that $\tilde\eps/\bar\eps$ is a universal constant.
    
    To estimate $\tilde{\calM}_{1133}$, we first apply the combinatorial inequality \eqref{est:comb1} and then the estimate \eqref{est:weightedLinfty} to obtain
    \begin{align*}
        \tilde{\calM}_{1133} &\le C\sum_{|\alpha| > r}\sum_{|\beta| = 2}^{\lfloor\frac{|\alpha|}{2}\rfloor}\n{|\alpha|+1}{r}\tilde{\eps}\varepsilon^\alpha{|\alpha| \choose |\beta|}\|t^{|\beta|}\p^\beta v\|_{L^\infty_{x,t}}\|t^{|\alpha|+1-|\beta|-r}\p^{\alpha - \beta}\nabla \rho\|_{L^2_{x,t}}\\
        &\le C\sum_{|\alpha| > r}\sum_{|\beta| = 2}^{\lfloor\frac{|\alpha|}{2}\rfloor}\frac{(|\alpha|+1)^{r-1}}{|\beta|!(|\alpha|-|\beta|+1)^{r-1}}\varepsilon^{\beta}\|t^{|\beta|}\p^\beta v\|_{L^\infty_{x,t}}\left(\n{|\alpha|+1-|\beta|}{r}\tilde{\eps}\varepsilon^{\alpha-\beta}\|t^{|\alpha|+1-|\beta|-r}\p^{\alpha-\beta}\nabla \rho\|_{L^2_{x,t}}\right)\\
        &\le C\sum_{|\alpha| > r}\sum_{|\beta| = 2}^{\lfloor\frac{|\alpha|}{2}\rfloor}\frac{(|\alpha|+1)^{r-1}}{|\beta|!(|\alpha|-|\beta|+1)^{r-1}}\varepsilon^{\beta}\Big((\|t^{|\beta|}\p_t\p^\beta v\|_{L^2_t\dot{H}^2_x}^{1/2} + |\beta|^{1/2}\|t^{|\beta|-1}\p^\beta v\|_{L^2_t\dot{H}^2_x}^{1/2})\|t^{|\beta|}\p^\beta v\|_{L^2_t\dot{H}^2_x}^{1/2}\\
        &\quad\quad+ \|t^{|\beta|}\p_t\p^\beta v\|_{L^2_{x,t}} + |\beta|\|t^{|\beta|-1}\p^\beta v\|_{L^2_{x,t}}\Big)\left(\n{|\alpha|+1-|\beta|}{r}\tilde{\eps}\varepsilon^{\alpha-\beta}\|t^{|\alpha|+1-|\beta|-r}\p_t^{\alpha-\beta}\nabla \rho\|_{L^2_{x,t}}\right).
    \end{align*}
    We estimate the most singular term above (i.e. the term with most derivatives) and others would follow from a similar argument. Using $r \ge 3$, we may estimate the most singular term as follows:
    \begin{align*}
        C&\sum_{|\alpha| > r}\sum_{|\beta| = 2}^{\lfloor\frac{|\alpha|}{2}\rfloor}\frac{(|\alpha|+1)^{r-1}}{|\beta|!(|\alpha|-|\beta|+1)^{r-1}}\varepsilon^{\beta}\Big((\|t^{|\beta|}\p_t\p^\beta v\|_{L^2_t\dot{H}^2_x}^{1/2}\|t^{|\beta|}\p^\beta v\|_{L^2_t\dot{H}^2_x}^{1/2}\Big)\left(\n{|\alpha|+1-|\beta|}{r}\tilde{\eps}\varepsilon^{\alpha-\beta}\|t^{|\alpha|+1-|\beta|-r}\p_t^{\alpha-\beta}\nabla \rho\|_{L^2_{x,t}}\right)\\
        &\le CT^{r-5/2}\tilde{\eps}^{-5/2}\sum_{|\alpha| > r}\sum_{|\beta| = 2}^{\lfloor\frac{|\alpha|}{2}\rfloor}\left(\n{|\beta|+3}{r}\sum_{\substack{\gamma_1 = 1,\\\gamma_2 + \gamma_3 = 2}}\epsilon^{\beta+\gamma}\|t^{|\beta|+3-r}\p^{\beta + \gamma }v\|_{L^2_{x,t}}\right)^{1/2}\\
        &\times \left(\n{|\beta|+2}{r}\sum_{\substack{\gamma_1 = 0,\\\gamma_2 + \gamma_3 = 2}}\epsilon^{\beta+\gamma}\|t^{|\beta|+2-r}\p^{\beta + \gamma}v\|_{L^2_{x,t}}\right)^{1/2} \times\left(\n{|\alpha|+1-|\beta|}{r}\tilde{\eps}\varepsilon^{\alpha-\beta}\|t^{|\alpha|+1-|\beta|-r}\p_t^{\alpha-\beta}\nabla \rho\|_{L^2_{x,t}}\right)\\
        &\le CT^{r-5/2}\tilde{\eps}^{-5/2}\phiv\phi_{1,T},
    \end{align*}
    where we used $2 \le |\beta| \le \lfloor\frac{|\alpha|}{2}\rfloor$ in the first inequality and Young's convolution inequality in the final inequality. We also used the fact that $\tilde{\eps} \le \bar\eps \le \eps \le 1$. Applying a similar argument to other terms, we may finally estimate $\tilde{\calM}_{1133}$ by
    \begin{align}
        \tilde{\calM}_{1133} &\le C\left(T^{r-5/2}\tilde{\eps}^{-5/2} + T^{r-5/2}\tilde{\eps}^{-2} + T^{r-1}\tilde{\eps}^{-1} + T^{r-1}\right)\phiv\phi_{1,T} + (T + T^2)\phivv\phi_{1,T}\notag\\
        &\le C(T^{r-5/2}\tilde{\eps}^{-5/2}\phiv\phi_{1,T} + T\phivv\phi_{1,T}), \label{M1133}
    \end{align}
    where we used $r \ge 3$ and $T,\tilde{\eps} < 1$.
    
    Finally, to estimate $\tilde{\calM}_{1134}$, we evoke \eqref{est:L4}, \eqref{est:weightedL4} and the combinatorial inequality \eqref{est:comb1} to deduce that
    \begin{align*}
        \tilde{\calM}_{1134} &\le C\sum_{|\alpha| > r}\sum_{|\beta| = \lfloor\frac{|\alpha|}{2}\rfloor+1}^{|\alpha|-1}\n{|\alpha|+1}{r}\tilde{\eps}\varepsilon^{\alpha}{|\alpha| \choose |\beta|}\left(\|t^{|\beta|+1-r}\p^\beta v\|_{L^2_t\dot{H}^1_x}^{1/2}\|t^{|\beta|+1-r}\p^\beta v\|_{L^2_{x,t}}^{1/2} + \|t^{|\beta|+1-r}\p^\beta v\|_{L^2_{x,t}}\right)\\
        &\times \Big((\|t^{|\alpha|-|\beta|}\p_t\p^{\alpha - \beta}\nabla^2 \rho\|_{L^2_{x,t}}^{1/2} + (|\alpha|-|\beta|+1)^{1/2}\|t^{|\alpha|-|\beta|-1}\p^{\alpha-\beta}\nabla^2 \rho\|_{L^2_{x,t}}^{1/2})\|t^{|\alpha|-|\beta|}\p^{\alpha-\beta}\nabla^2 \rho\|_{L^2_{x,t}}^{1/2}\\
        &+ \|t^{|\alpha|-|\beta|}\p_t\p^{\alpha - \beta}\nabla \rho\|_{L^2_{x,t}} + (|\alpha|-|\beta|+1)\|t^{|\alpha|-|\beta|-1}\p^{\alpha-\beta}\nabla \rho\|_{L^2_{x,t}} \Big).
    \end{align*}
    As in estimates for $\tilde{\calM}_{1133}$, we treat the most singular term, while the rest of the terms will follow from a similar argument. The most singular term is given by:
    \begin{align*}
        C&\sum_{|\alpha| > r}\sum_{|\beta| = \lfloor\frac{|\alpha|}{2}\rfloor+1}^{|\alpha|-1}\n{|\alpha|+1}{r}\tilde{\eps}\varepsilon^{\alpha}{|\alpha| \choose |\beta|}\|t^{|\beta|+1-r}\p^\beta v\|_{L^2_t\dot{H}^1_x}^{1/2}\|t^{|\beta|+1-r}\p^\beta v\|_{L^2_{x,t}}^{1/2}\|t^{|\alpha|-|\beta|}\p_t\p^{\alpha - \beta}\nabla^2 \rho\|_{L^2_{x,t}}^{1/2}\|t^{|\alpha|-|\beta|}\p^{\alpha-\beta}\nabla^2 \rho\|_{L^2_{x,t}}^{1/2}\\
        &\le CT^{r-2}\tilde{\eps}^{-2}\sum_{|\alpha| > r}\sum_{|\beta| = \lfloor\frac{|\alpha|}{2}\rfloor+1}^{|\alpha|-1}\left(\n{|\beta|+1}{r}\sum_{\substack{\gamma_1 = 0,\\\gamma_2 + \gamma_3 = 1}}\varepsilon^{\beta+\gamma}\|t^{|\beta|+1-r}\p^{\beta+\gamma} v\|_{L^2_{x,t}}\right)^{1/2}\left(\frac{|\beta|^r}{|\beta|!}\varepsilon^\beta\|t^{|\beta|-r}\p^\beta v\|_{L^2_{x,t}}\right)^{1/2}\\
        &\quad\quad\times \left(\n{|\alpha|-|\beta|+3}{r}\sum_{\substack{\gamma_1 = 1,\\\gamma_2 + \gamma_3 = 2}}\varepsilon^{\alpha-\beta+\gamma}\|t^{|\alpha|-|\beta|+|\gamma|-r} \p^{\alpha - \beta + \gamma} \rho\|_{L^2_{x,t}}\right)^{1/2}\\
        &\quad\quad\times \left(\n{|\alpha|-|\beta|+2}{r}\sum_{\substack{\gamma_1 = 0,\\\gamma_2 + \gamma_3 = 2}}\varepsilon^{\alpha-\beta+\gamma}\|t^{|\alpha|-|\beta|+|\gamma|-r}\p^{\alpha-\beta + \gamma}\rho\|_{L^2_{x,t}}\right)^{1/2}\\
        &\le CT^{r-2}\tilde\eps^{-2}\phiv \phi_{1,T},
    \end{align*}
    where we used $\lfloor\frac{|\alpha|}{2}\rfloor + 1 \le |\beta| \le |\alpha|-1$ in the first inequality and Young's convolution inequality in the second inequality. Proceeding in a similar way, we may conclude the following estimate:
    \begin{align}
    \tilde{\calM}_{1134} &\le C(T^{r-2}\tilde{\eps}^{-2} + T^{r-2}\tilde{\eps}^{-3/2} + T^{r-3/2}\tilde{\eps}^{-3/2}\notag\\
    &+ T^{r-3/2}\tilde{\eps}^{-1/2} + T^{r-1}\tilde{\eps}^{-1} + T^{r-1})\phiv\phi_{1,T} + (T^{r-2} + T^{r-3/2} + T^{r-1})\phiv\phi_{0,T}\notag\\
    &\le C(T^{r-2}\tilde{\eps}^{-2}\phiv\phi_{1,T} + T^{r-2}\phiv\phi_{0,T}), \label{M1134}
    \end{align}
    where we used the fact that $r \ge 3, T < 1, \epsilon < 1$.
    Combining estimates \eqref{M1131}--\eqref{M1134}, we conclude that
    \begin{align}
    \label{M113}
    \tilde{\calM}_{113} \le C(T^{r-5/2}\tilde{\eps}^{-5/2}\phiv\phi_{1,T} + T^{r-2}\phiv\phi_{0,T} + \phivv\phi_{1,T}).
    \end{align}

    \item \textbf{Estimate of $\tilde{\calM}_{11}$}
    Combining estimates \eqref{M111}, \eqref{M112}, \eqref{M113}, we conclude that
    \begin{align}
        \label{M11}
        \tilde{\calM}_{11} &\le C(r) \eps^{r} \phi_{0,T}^v \phi_{0,T} + C\left(\frac{\eps}{\tilde{\eps}}\right)^r(\phivv\phi_{0,T} + \phivv\phi_{1,T} + \phivv\phi_{0,T}^{1/2}\phi_{1,T}^{1/2})\notag\\
        &+ C(r)(T^{r-5/2}\tilde{\eps}^{-5/2}\phiv\phi_{1,T} + T^{r-2}\phiv\phi_{0,T} + \phivv\phi_{1,T})\notag\\
        &\le C(r)\left(\phi_{0,T}^v \phi_{0,T} + \phi_{0,T}^v \phi_{1,T} + T^{r-5/2}\phi_{1,T}^v \phi_{1,T} + T^{r-2}\phi_{1,T}^v \phi_{0,T}\right),
    \end{align}
    where we used that the choices of $\eps, \bar{\eps},\tilde\eps$ only depend on $r$ and particularly is independent of $T$.
    \item {\bf{Estimate of $\tilde{\calM}_{12}$.}} Notice that $\tilde{\calM}_{12}$ has the same structure as $\tilde{\calM}_{11}$. In fact, an estimate of $\tilde{\calM}_{12}$ follows from that of $\tilde{\calM}_{11}$ with the role of $\rho$ and $v$ interchanged:
    \begin{equation}
        \label{M12}
        \tilde{\calM}_{12} \le C(r)\left(\phi_{0,T}^v \phi_{0,T} + \phi_{1,T}^v \phi_{0,T} + T^{r-5/2}\phi_{1,T}^v \phi_{1,T} + T^{r-2}\phi_{0,T}^v \phi_{1,T}\right)
    \end{equation}
   \end{enumerate}
   Finally, we combine estimates \eqref{M11}, \eqref{M12} and the fact that $T^{r-2} < 1$ to obtain:
   $$
   \tilde{\calM}_{1} \le C(r)\left(\phi_{0,T}^v \phi_{0,T} + \phi_{1,T}^v \phi_{0,T} + \phi_{0,T}^v \phi_{1,T} + T^{r-5/2}\phi_{1,T}^v \phi_{1,T}\right).
   $$
   The proof is thus completed.
\end{proof}

Based on the preliminary results above, we are ready to estimate $\sum_{i = 1}^6 M_i$. Combining \eqref{simply3} and \eqref{tM1}, we deduce that given vector field $v$,
\begin{equation}
    \label{est:Maux1}
    \calM_1(v) + \calM_2(v) + \calM_3(v) \le C(r)\left(\phi_{0,T}^v \phi_{0,T} + \phi_{1,T}^v \phi_{0,T} + \phi_{0,T}^v \phi_{1,T} + T^{r-5/2}\phi_{1,T}^v \phi_{1,T}\right),
\end{equation}
where we used that $T < 1$ and that the choice of $\eps, \bar\eps, \tilde\eps$ only depends on $r$. We first observe that by definition
\begin{align*}
    M_i = \calM_i(u),\quad M_{i+3} = \calM_i(\nabla c),
\end{align*}
for $i = 1,2,3$. Moreover, we recall that by definition of $\phi_{0,T}^v$ and estimate \eqref{est:phi0}:
\begin{equation}
    \label{est:Maux2}
\phi_{0,T}^u \le \phi_{0,T} \le C(\rho_0, u_0, r)T^{1/2},\quad \phi_{1,T}^u = \phi_{2,T}.
\end{equation}
By \eqref{est:phi0} and Gevrey elliptic regularity estimate \eqref{est:gevreyellip}, we also have
\begin{equation}
    \label{est:Maux3}
\phi_{0,T}^{\na c} \le C(\rho_0, u_0, r)T^{1/2},\quad \phi_{1,T}^{\na c} \le C\left(1 + T\tilde{\epsilon} + \fr{T\tilde{\epsilon}^2}{\bar{\epsilon}}\right)\phi_{1,T} \le C\phi_{1,T},
\end{equation}
where we used $\tilde\eps < \bar\eps$ and $T < 1$ in the second inequality of the estimate for $\phi_{1,T}^{\na c}$. Finally, we insert \eqref{est:Maux2}, \eqref{est:Maux3} into \eqref{est:Maux1} to conclude \eqref{est:M}. This concludes the proof of Proposition \ref{prop:M}.

\subsection{Estimates of $N_i$}
We treat the nonlinear forcing terms in the Navier-Stokes equation in this section.
\beg{prop}\label{prop:N} Let $T \in (0,1]$. The following estimates
\begin{equation} \label{gevest1}
N_1 + N_2 + N_3 
\le C\left(\phi_{0,T}^{\fr{1}{2}} \phi_T^{\fr{1}{2}} + T^{\fr{1}{2}} \phi_T^2 \right)
\end{equation} and 
\begin{equation} \label{gevest2}
N_4 + N_5 + N_6 
\le C \left(T^2 \tilde{\epsilon}^2 + T^2 \bar{\epsilon}^2 + T\epsilon\right)\phi_{1,T} + C\left(\tilde{\epsilon}^2 + \bar{\epsilon}^2 + \epsilon \right) \phi_{0,T} 
\end{equation} hold for some universal positive constant $C$ depending only on $r$.
\end{prop}

\beg{proof} The bound \eqref{gevest1} follows from \cite{CKV}. As for \eqref{gevest2}, we estimate the terms $N_4$, $N_5$, and $N_6$ separately. 
We start by splitting $N_4$ into the sum 
\be 
N_4 = N_{4,1} + N_{4,2}
\ee 
where
\be 
N_{4,1} = \sum\limits_{i+j+k > r} \fr{(i+j+k+2)^r \epsilon^i \tilde{\epsilon}^{j+2} \bar{\epsilon}^k}{(i+j+k+2)!} \|t^{i+j+k+2-r} \pa_t^i \pa_2^j \pa_1^k  \rho\|_{L^2((0,T) \times \Omega)} 
\ee and 
\be 
N_{4,2} = \sum\limits_{r - 2 \le i+j+k \le r} \fr{(i+j+k+2)^r \epsilon^i \tilde{\epsilon}^{j+2} \bar{\epsilon}^k}{(i+j+k+2)!} \|t^{i+j+k+2-r} \pa_t^i \pa_2^j \pa_1^k  \rho\|_{L^2((0,T) \times \Omega)}. 
\ee In view of the algebraic inequality
\be 
\fr{(i+j+k+2)^r}{(i+j+k+2)!}
\le C(r) \fr{(i+j+k)^r}{(i+j+k)!}
\ee that holds for all positive indices $i, j, k$ obeying $i+ j + k \ge 1$, we bound 
\begin{equation} \label{gevest3}
\beg{aligned}
N_{4,1}
&\le C(r) T^2 \tilde{\epsilon}^2 \sum\limits_{i + j + k > r} \fr{(i+j+k)^r}{(i+j+k)!} \epsilon^i \tilde{\epsilon}^j \bar{\epsilon}^k \|t^{i+j+k-r} \pa_t \pa_2^j \pa_1^k \rho\|_{L^2_{x,t}((0,T) \times \Omega)}
\\&\le C(r) T^2 \tilde{\epsilon}^2 \phi_{1,T}.
\end{aligned}
\end{equation} Due to the smallness of the parameters $\epsilon, \tilde{\epsilon}$, and $\bar{\epsilon}$ and the restriction of the time variable to the interval $(0,T)$ that lies within $[0,1]$, we have
\begin{equation} \la{gevest4}
\beg{aligned}
N_{4,2} \le C(r)\tilde{\epsilon}^2 \sum\limits_{i+j+k \le r} \|\pa_t^i \pa_2^j \pa_1^k \rho\|_{L^2 ((0,T) \times \Omega)} 
\le C(r) \tilde{\epsilon}^2 \phi_{0,T}.
\end{aligned}
\end{equation} Putting \eqref{gevest3} and \eqref{gevest4} together, we infer that
\be 
N_{4} \le C(r) T^2 \tilde{\epsilon}^2 \phi_{1,T} + C(r) \tilde{\epsilon}^2 \phi_{0,T}. 
\ee Similarly, we can bound $N_5$ and $N_6$ by
\be 
N_5 \le C(r) T^2 \bar{\epsilon}^2 \phi_{1,T} + C(r) \bar{\epsilon}^2 \phi_{0,T}
\ee and 
\be 
N_6 \le C(r) T {\epsilon} \phi_{1,T} + C(r) {\epsilon} \phi_{0,T}
\ee respectively. Adding the latter three estimates, we obtain \eqref{gevest2}.

\end{proof}

\textbf{Acknowledgement.} ZH acknowledges partial support from NSF-DMS grant 2306726.
\vspace{0.5cm}

{\bf{Data Availability Statement.}} The research does not have any associated data.

\vspace{0.5cm}

{\bf{Conflict of Interest.}} The authors declare that they have no conflict of interest.

\end{document}